\documentclass{amsart}

\usepackage{graphicx}
\usepackage{comment}
\usepackage{mathrsfs}
\usepackage{amsmath}
\usepackage{amssymb}
\usepackage{amsthm}
\usepackage{amsfonts}

\newtheorem{theorem}{Theorem}[section]
\newtheorem{lemma}[theorem]{Lemma}
\newtheorem{prop}[theorem]{Proposition}

\theoremstyle{definition}

\newtheorem{example}[theorem]{Example}

\newtheorem{remark}{Remark}

\numberwithin{equation}{section}

%    Absolute value notation

%    Blank box placeholder for figures (to avoid requiring any
%    particular graphics capabilities for printing this document).

\begin{document}

\title[Teichm\"uller modular group and generalized Cantor set]{On countability of Teichm\"uller modular groups for analytically infinite Riemann surfaces defined by generalized Cantor sets}

%    Information for first author
\author{Erina Kinjo}
%    Address of record for the research reported here
\address{Department of Mechanical Engineering,  
Ehime University, 3 Bunkyo-cho, Matsuyama, Ehime 790-8577, Japan}
%    Current address
%\curraddr{Department of Mathematics and Statistics, Case Western Reserve University, Cleveland, Ohio 43403}
\email{kinjo.erina.ax@ehime-u.ac.jp}
%    \thanks will become a 1st page footnote.
%\thanks{The first author was supported in part by NSF Grant \#000000.}

%    Information for second author
%\author{Author Two}
%\address{Mathematical Research Section, School of Mathematical Sciences,
%Australian National University, Canberra ACT 2601, Australia}
%\email{two@maths.univ.edu.au}
%\thanks{Support information for the second author.}

%    General info
\subjclass[2020]{Primary 30F60; Secondary 32G15}
\date{July 6, 2024}

%\dedicatory{This paper is dedicated to our advisors.}

\keywords{Riemann surface of infinite type, Teichm\"uller modular group, Generalized Cantor set}

\begin{abstract}
For any analytically finite Riemann surface, the Teichm\"uller modular group is countable, but it is not easy to find an analytically infinite Riemann surface for which the Teichm\"uller modular group is countable. In this paper, we show that the Teichm\"uller modular group is countable or uncountable for some analytically infinite Riemann surfaces defined by generalized Cantor sets.
 \end{abstract}

\maketitle

%%%%%%%%%%% section 1 %%%%%%%%%%%%%

\section{Introduction}

\subsection{Terminology of Riemann surfaces} 

We call a Riemann surface $X$ \textit{hyperbolic} if $X$ is represented by a quotient space $\mathbb{D}/\Gamma$ of the unit disk $\mathbb{D}$ by a torsion-free Fuchsian group $\Gamma$. In this paper, any Riemann surface is supposed to be hyperbolic. A Riemann surface $X$ is of \textit{analytically finite type} if $X$ is obtained from a compact surface by removing at most finitely many points, and $X$ is of \textit{analytically infinite type} if $X$ is not of analytically finite type. On the other hand, a Riemann surface $X$ is of \textit{topologically finite type} if the fundamental group $\pi_1 (X) \cong \Gamma$ is finitely generated, and $X$ is of \textit{topologically infinite type} if $X$ is not of topologically finite type. Also, a Fuchsian group $\Gamma$ is of \textit{the first kind} if the limit set of $\Gamma$ coincides with the unit circle: $\Lambda (\Gamma) = \partial \mathbb{D}$, and $\Gamma$ is of \textit{the second kind} if $\Lambda (\Gamma) \subsetneq \partial \mathbb{D}$. Now, if $X$ has the boundary, then we write it as $\partial X$. Also, a Fuchsian group $\Gamma$ acts properly discontinuously on $\overline{\mathbb{D}} \setminus \Lambda (\Gamma)$, so if $\Gamma$ is of the second kind, then we obtain a bordered Riemann surface $(\overline{\mathbb{D}} \setminus \Lambda(\Gamma))/\Gamma$ containing $X$ as its interior. We refer to  $(\partial \mathbb{D} \setminus \Lambda(\Gamma))/\Gamma$ as the \textit{boundary at infinity} of $X$ and write it as $\partial X$, too. 

\subsection{Teichm\"uller space and its Teichm\"uller modular group}

For a Riemann surface $X$, the \textit{Teichm\"uller space} $T(X)$ is the set of Teichm\"uller equivalence classes of quasiconformal mappings $f$ of $X$ onto another Riemann surface, where two quasiconformal mappings $f_1$ and $f_2$ are Teichm\"uller equivalent if there exists a conformal mapping $h: f_1 (X) \to f_2(X)$ such that $f_2^{-1} \circ h \circ f_1: X \to X$ is homotopic to the identity. If $\partial X \neq \emptyset$, the homotopy is considered to be relative to $\partial X$ ($:=rel. $ $\partial X$), that is, the homotopy fixes points of $\partial X$.  We write the Teichm\"uller equivalence class of $f$ as $[f]$. It is known that $T(X)$ has a complex Banach manifold structure, and if $X$ is of analytically finite type, then $\dim T(X) < \infty$; othewise $\dim T(X) = \infty$. On $T(X)$, a  distance between two points $[f_1]$ and $ [f_2]$ is defined by $d_T ([f_1], [f_2]) = \inf_f \log K(f)$, where the infimum is taken over all quasiconformal mappings from $f_1(X)$ to $f_2(X)$ homotopic to $f_2 \circ f_1^{-1}$ ($rel.$ $\partial X$ if $\partial X \neq \emptyset$), and $K(f)$ is the maximal dilatation of $f$. This is a complete distance on $T(X)$ and is called \textit{Teichm\"uller distance}. 

For a Riemann surface $X$, the \textit{quasiconformal mapping class group} MCG$(X)$ is defined as the group of all homotopy classes $[g]$ of quasiconformal automorphisms $g$ of $X$ ($rel.$ $\partial X$ if $\partial X \neq \emptyset$). For each $[g] \in$ MCG$(X)$, define the transformation $[g]_{*}$ of $T(X)$ as $[f] \mapsto [f \circ g^{-1}]$, then MCG$(X)$ acts on $T(X)$ isometrically with respect to $d_T$. Now, let Aut$(T(X))$ be the group of all isometric biholomorphic automorphisms of $T(X)$. We consider the homomorphism $\iota:$ MCG$(X) \to$ Aut$(T(X))$ defined by $[g] \mapsto [g]_{*}$ and define the \textit{Teichm\"uller modular group} for $X$, which is denoted by Mod$(X)$, as the image Im $\iota \subset$ Aut$(T(X))$ of $\iota$. Except for a few low-dimensional Teichm\"uller spaces, the homomorphism $\iota$ is injective (cf.  \cite{Epstein}, \cite{Matsuzaki2}) and surjective (cf. \cite{Markovic}). Therefore, in this paper, we identify the quasiconformal mapping class group with the Teichm\"uller modular group. 

In section 3, we think a bit about the \textit{reduced Teichm\"uller modular group}  Mod$^{\sharp} (X)$ for a Riemann surface $X$. This is the quotient group of Mod$(X)$ by free homotopy equivalence, that is, the homotopy does not necessarily fix points of $\partial X$ if $\partial X \neq \emptyset$. 

\subsection{Some Riemann surfaces of topologically infinite type and Teichm\"uller modular groups for them}

In 2003, Shiga (\cite{Shiga1}) considered two distances on the Teichm\"uller space $T(X)$; the Teichm\"uller distance $d_T$ and the length spectrum distance $d_L$. By the definition, the Teichm\"uller distance $d_T([f_1],[f_2])$ means how different the complex structures of two Riemann surfaces $f_1(X)$ and $f_2(X)$ are. On the other hand, though we do not describe the definition in this paper, the length spectrum distance $d_L([f_1],[f_2])$ means how different the hyperbolic structures of two Riemann surfaces $f_1(X)$ and $f_2(X)$ are. If $X$ is analytically finite Riemann surface, then the two distances $d_T$ and $d_L$ define the same topology on $T(X)$, but otherwise it is not always true. Shiga constructed a topologically infinite Riemann surface $S$ such that the two distances define different topologies on $T(S)$. His Riemann surface is essentially the same as the Riemann surface $S$ constructed as follows: let $\{ a_n \}_{n=1}^{\infty}$ be a monotonic divergent sequence of positive numbers such that $a_{n+1}> n a_n$, and let $\{ P_n \}_{n=1}^{\infty}$ be a sequence of pairs of pants such that the hyperbolic lengths of three boundary geodesics of $P_n$ are $a_n, a_{n+1}, a_{n+1}$ $(n=1,2,...)$. Firstly, make $2$ copies of $P_1$ and glue them together along the boundaries of length $a_1$, then we obtain a Riemann surface $S_1$ of type $(0,4)$. Secondly, make $4$ copies of $P_2$ and glue them to $S_1$ along the boundaries of length $a_2$, then we obtain a Riemann surface $S_2$ of type $(0,8)$. Inductively, for each $n$, make $2^n$ copies of $P_n$ and glue them to $S_{n-1}$ along the boundaries of length $a_n$, then we obtain a Riemann surface $S_n$ of type $(0,2^{n+1})$. We define the Riemann surface $S$ as the exhaustion of $\{S_n\}_{n=1}^{\infty}$, i.e.,  $S=\bigcup_{n=1}^{\infty} S_n$.  (By the way, he also showed that if a topologically infinite Riemann surface $X$ satisfies some condition, the two distances define the same topology on $T(X)$ in the same paper \cite{Shiga1}. And in 2018, we generalized his theorem, more precisely, we showed  that if $X$ is a Riemann surface with bounded geometry, then the two distances define the same topology on $T(X)$ (\cite{Kinjo3}).)   

In 2005, Matsuzaki (\cite{Matsuzaki1}) considered Shiga's Riemann surface $S$, a reconstructed Riemann surface $R$ from $S$ and the Teichm\"uller modular group Mod$(R)$ for $R$. Before mentioning it, we introduce a proposition for countability of the Teichm\"uller modular group.

\begin{prop}[(Proposition 1 of \cite{Matsuzaki1}]\label{prop1}
Suppose $X$ is a hyperbolic Riemann surface. If \rm{Mod}$(X)$ is countable, then $X=\mathbb{D}/\Gamma$ satisfies the following conditions.
\begin{enumerate}
\item The number of simple closed geodesics on $X$ of which lengths are smaller than $M$ for arbitrary $M >0$ is finite.
 
\item The Fuchsian group $\Gamma$ is of the first kind.
\end{enumerate}
\end{prop}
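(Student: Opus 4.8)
The plan is to prove the contrapositive of each of the two implications separately: assuming that condition~(1) fails I will produce uncountably many elements of $\mathrm{Mod}(X)$, and likewise assuming $\Gamma$ is of the second kind. In both cases the source of uncountability is the same principle — a family of mutually independent quasiconformal deformations indexed by an infinite set, from which one reads off $2^{\aleph_0}$ distinct mapping classes by taking subsets. For~(1) the deformations will be Dehn twists along disjoint short geodesics; for~(2) they will be deformations detected by their action on the boundary at infinity.

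For condition~(1), suppose there is an $M>0$ and infinitely many distinct simple closed geodesics $\gamma_1,\gamma_2,\dots$ on $X$ with $\ell(\gamma_n)<M$. First I would establish a local finiteness statement: for any ball $B(p_0,R)\subset X$, only finitely many simple closed geodesics of length $<M$ meet it. This is where discreteness of $\Gamma$ enters, since a geodesic meeting $B(p_0,R)$ of length $<M$ lifts to the axis of a hyperbolic $g\in\Gamma$ with $d(\tilde p_0,g\tilde p_0)\le 2R+M$ for a fixed lift $\tilde p_0$, and there are only finitely many such $g$. Choosing a basepoint $q_n\in\gamma_n$, local finiteness forces $q_n\to\infty$ (no bounded subsequence), so after passing to a sparse subsequence I may assume $d(q_m,q_n)>M$ for $m\ne n$. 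Since each $\gamma_n$ has diameter at most $M/2$, the subsequence then consists of pairwise disjoint geodesics.

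Next I would invoke the collar lemma: disjoint simple closed geodesics have disjoint embedded collars, and a geodesic of length $<M$ has a collar of half-width at least $w_0:=\mathrm{arcsinh}\big(1/\sinh(M/2)\big)>0$. Hence the Dehn twist $t_n$ supported in the $n$-th collar is quasiconformal with dilatation bounded by a constant $K_0=K_0(M)$, and because the collars are disjoint, for every subset $S\subseteq\mathbb{N}$ the composition $g_S:=\prod_{n\in S}t_n$ is a quasiconformal automorphism of $X$ with dilatation $\le K_0$. The crux — and the step I expect to be the main obstacle — is to show that $S\mapsto[g_S]_*\in\mathrm{Mod}(X)$ is injective. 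If $n\in S\triangle S'$, then $g_S$ and $g_{S'}$ differ by a nontrivial power of $t_n$ relative to their common part, so they send a curve $\delta_n$ crossing $\gamma_n$ (which exists since $\gamma_n$ is essential) to non-homotopic simple closed curves; the associated hyperbolic length functions on $T(X)$ then differ after applying the two transformations, giving $[g_S]_*\ne[g_{S'}]_*$. Making this detection precise — the faithful independence of Dehn twists, transported through $\iota$ — is the delicate point, but it yields $2^{\aleph_0}$ distinct elements, so $\mathrm{Mod}(X)$ is uncountable.

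For condition~(2), suppose $\Gamma$ is of the second kind, so that $\Lambda(\Gamma)\subsetneq\partial\mathbb{D}$ and $\partial X\ne\emptyset$; thus $X$ carries a nonempty boundary at infinity and a funnel-type neighborhood of it. The key structural feature I would use is that $\mathrm{Mod}(X)$ is defined through homotopy \emph{rel}~$\partial X$: if two quasiconformal automorphisms of $X$ restrict to different maps of the ideal boundary, they cannot be homotopic rel $\partial X$ and hence represent different elements of $\mathrm{Mod}(X)$. I would then construct a family of quasiconformal automorphisms $\{h_t\}$, supported in a collar of the funnel and tapering to the identity toward the interior, that induce mutually distinct quasisymmetric self-maps of a component of $\partial X$ (for a circle component these may be taken to be the rotations). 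Since there are uncountably many distinct boundary maps and each is realized by some $h_t$ with controlled dilatation, the classes $[h_t]$ give uncountably many distinct elements of $\mathrm{Mod}(X)$. This is exactly the phenomenon that the reduced modular group $\mathrm{Mod}^{\sharp}(X)$ forgets, and it is the reason the unreduced $\mathrm{Mod}(X)$ fails to be countable whenever $\Gamma$ is of the second kind. Combining the two contrapositives then establishes the proposition.
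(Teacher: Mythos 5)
Your proposal is essentially correct and reproduces the standard argument: the paper itself gives no proof of this proposition (it is quoted verbatim from Proposition 1 of \cite{Matsuzaki1}), and your two contrapositives --- an infinite product of Dehn twists in disjoint collars of escaping short geodesics for (1), and uncountably many boundary values detected by homotopy rel $\partial X$ for (2) --- are exactly the mechanisms used there. The one point you rightly flag as delicate, the faithfulness of the infinite multitwist $S \mapsto [g_S]$, is settled by choosing for each $n \in S \triangle S'$ a compact transversal meeting only finitely many of the collars; this is carried out in \cite{Matsuzaki2003} and \cite{Matsuzaki1}.
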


In \S 3 of \cite{Matsuzaki1}, Matsuzaki showed that if a Riemann surface $S$ is constructed by gluing above-mentioned pants $\{ P_n\}_{n=1}^{\infty}$ in the \textit{usual} way, then $S$ is not geodesically complete, that is, there exists a geodesic connecting $\partial P_1$ and $\partial P_n$ such that its length converges as $n \to \infty$. This means that the geodesic completion of $S$ does not coincide with $S$, hence the Fuchsian group corresponding to $S$ is of the second kind. (cf. Proposition 3.7 of \cite{Basmajian-saric}.) In particular, Mod$(S)$ is uncountable by Proposition \ref{prop1} (2). However,  if a Riemann surface $R$ is constructed by gluing above-mentioned pants $\{ P_n\}_{n=1}^{\infty}$ in a \textit{special} way, then $R$ is geodesically complete, so the geodesic completion of $R$ coincides with $R$. Here, a special way is to give each boundary geodesic of each pair of pants some amount of twist when we glue pants together. Then, the corresponding Fuchsian group is of the first kind, and also he could show that Mod$(R)$ is countable. 

\subsection{Generalized Cantor sets} 

Let $\{q_n\}_{n=1}^{\infty}$ be a sequence of numbers in $(0,1)$. Put $I:= [0,1] \subset \mathbb{R}$. A generalized Cantor set $E(\omega)$ for $\omega=\{q_n\}_{n=1}^{\infty}$ is defined as follows: Firstly, remove an open interval with the length $q_1$ from $I$ so that the remaining intervals $I_1^1, I_1^2 \subset I$ have the same length. Secondly, remove an open interval with the length $q_2 |I_1^1 |$ from each $I_1^i$ $(i=1,2)$ so that the remaining intervals $I_2^1,I_2^2, I_2^3, I_2^4 \subset I$ have the same length, where $|\cdot|$ means the length of the interval. Inductively, remove an open interval with the length $q_n |I_{n-1}^1|$ from each $I_{n-1}^i$ $(i=1,...,2^{n-1} )$ so that the remaining intervals $I_n^1,..., I_n^{2^n} \subset I$ have the same length. For each $n \in \mathbb{N}$, put $E_n = \bigcup_{i=1}^{2^n} I_n^{i}$. We define a generalized Cantor set $E(\omega)$ for $\omega$ as $\bigcap_{n=1}^{\infty} E_n$. In our previous paper (\cite{Kinjo4}), we considered the Riemann surface $X_{E(\omega)} := \hat{\mathbb{C}} \setminus E(\omega)$ (obtained from the Riemann sphere $ \hat{\mathbb{C}}$ by removing $E(\omega)$) and the Teichm\"uller space $T(X_{E(\omega)})$ of $X_{E(\omega)}$, and proved a theorem about the Teichm\"uller distance $d_T$ and the length spectrum distance $d_l$ on $T(X_{E(\omega)})$. In this paper, we consider the Teichm\"uller modular group for $X_{E(\omega)}$.

\subsection{Our results}

 At first, we give a sufficient condition for Mod$(X_{E(\omega)})$ to be uncountable. It is obtained by Proposition \ref{prop1} above and some lemma of our previous paper \cite{Kinjo4}.

\begin{theorem}\label{thm0} If there exists a subsequence $\{ q_{n(k)} \}_{k=1}^{\infty}$ of $\omega= \{q_n\}_{n=1}^{\infty}$ such that $q_{n(k)} >c$ for some constant $c \in (0,1)$, then the Teichm\"uller modular group for the Riemann surface $X_{E(\omega)}$ is uncountable. In particular, if $\inf_n q_n \neq 0$, then \rm{Mod}$(X_{E(\omega)})$ is uncountable.     
\end{theorem}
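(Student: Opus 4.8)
The plan is to leverage Proposition~\ref{prop1}: since that result says countability of $\mathrm{Mod}(X)$ forces the Fuchsian group to be of the first kind \emph{and} forces the number of short geodesics below any bound to be finite, it suffices to violate one of these conditions. The natural target is condition (1): I would show that the hypothesis $q_{n(k)} > c$ produces infinitely many simple closed geodesics on $X_{E(\omega)}$ whose hyperbolic lengths are bounded above by a single constant $M = M(c)$. Under that hypothesis, condition (1) of Proposition~\ref{prop1} fails, so $\mathrm{Mod}(X_{E(\omega)})$ cannot be countable, which is exactly the conclusion.

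First I would make precise the geometric picture: the removed generalized Cantor set $E(\omega)$ is built from nested interval families $\{I_n^i\}$, and around each gap removed at stage $n$ there is naturally a separating simple closed curve on $X_{E(\omega)} = \hat{\mathbb{C}} \setminus E(\omega)$ that encircles a clustered piece of the Cantor set. The key quantitative input — which is where I expect to cite the lemma from the author's previous paper \cite{Kinjo4} alluded to in the paragraph introducing the results — is a comparison between the removal ratios $q_n$ and the hyperbolic lengths of these associated geodesics: a large $q_n$ (bounded below by $c$) corresponds to a relatively ``fat'' gap, forcing the encircling geodesic to have length bounded above by a constant depending only on $c$. Concretely, I would invoke that lemma to assert the existence of a function $M(c)$ such that whenever $q_n > c$, there is a simple closed geodesic $\gamma_n$ on $X_{E(\omega)}$ with $\ell(\gamma_n) \le M(c)$.

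Next, passing to the subsequence $\{q_{n(k)}\}$ with $q_{n(k)} > c$, I would extract the corresponding family of geodesics $\{\gamma_{n(k)}\}_{k=1}^\infty$, each of hyperbolic length at most $M(c)$. Because these curves encircle Cantor pieces at distinct stages $n(k)$ of the construction, they are pairwise distinct (indeed pairwise non-homotopic, as they separate different portions of $E(\omega)$), so the family is genuinely infinite. This exhibits infinitely many simple closed geodesics of length smaller than $M := M(c) + 1$, violating condition (1) of Proposition~\ref{prop1}, and hence $\mathrm{Mod}(X_{E(\omega)})$ is uncountable. The final sentence of the theorem is then immediate: if $\inf_n q_n \neq 0$, say $\inf_n q_n = c' > 0$, then \emph{every} $q_n$ satisfies $q_n \ge c'$, so in particular the full sequence serves as the required subsequence with constant $c = c'/2$.

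The main obstacle I anticipate is establishing the uniform length bound $\ell(\gamma_n) \le M(c)$ from the combinatorial lower bound $q_n > c$ — that is, translating the Euclidean removal-ratio data of the Cantor construction into hyperbolic-geometric control of the geodesics on $X_{E(\omega)}$. This comparison is not routine because the hyperbolic metric on $\hat{\mathbb{C}} \setminus E(\omega)$ depends globally on the whole removed set, not just on the local gap at stage $n$; the step relies essentially on the estimates developed in \cite{Kinjo4} relating $q_n$ to the geometry of the collar around the separating curve. Once that lemma is in hand, the remaining argument — distinctness of the curves and the appeal to Proposition~\ref{prop1} — is straightforward bookkeeping.
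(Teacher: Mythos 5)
Your proposal is correct and follows essentially the same route as the paper: the paper cites Lemma 2.1(1) of \cite{Kinjo4}, which gives exactly the bound you anticipated, namely $\ell_{X_{E(\omega)}}([\gamma_n^1]) < \pi^2/\tanh^{-1} q_n$, so that $q_{n(k)} > c$ yields infinitely many distinct simple closed geodesics of length less than $\pi^2/\tanh^{-1} c$, contradicting condition (1) of Proposition~\ref{prop1}. The only difference is that the paper has this quantitative lemma ready-made, whereas you correctly flagged it as the one nontrivial input to be imported.
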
 

Not only $\omega$ such that $\inf_n q_n \neq 0$ but also some $\omega$ such that $\inf_n q_n =0$ satisfies the condition of Theorem \ref{thm0}. For example, let $\omega=\{ q_n \}_{n=1}^{\infty}$ be a sequence defined by 
\[
 q_n =
 \begin{cases}
 \frac{1}{2} & (n=2m-1; m \in \mathbb{N})\\
 (\frac{1}{2})^n & (n=2m; m \in \mathbb{N}).
 \end{cases}
 \]
Then $\inf_n q_n =0$ and there exists a subsequence $\{ q_{2m} \}_{m=1}^{\infty}$ of $\omega$ such that $q_{n(k)} >1/3$.

Next, we give a sufficient condition for Mod$(X_{E(\omega)})$ to be countable. In Theorem 1.1 of our previous paper (\cite{Kinjo4}), we considered two conditions (I),(II) for $\omega$ such that $\inf_n q_n =0$, and showed that if $\omega$ satisfies either (I) or (II), then the two distances $d_T$ and $d_L$ define the different topologies on $T(X_{E(\omega)})$. Now, if $\omega$ satisfies (II), then it satisfies the condition of Theorem \ref{thm0} above, too. On the other hand, if $\omega$ satisfies (I), then it does not do so.  In this paper, our main theorem below says that if $\omega$ satisfies (I), then \rm{Mod}$(X_{E(\omega)})$ is countable:

\begin{theorem}\label{thm1}
If the sequence $\omega$ satisfying $q_n \cdot \log(\log (1/q_{n+1})) \to \infty$ as $n \to \infty$, then \rm{Mod}$(X_{E(\omega)})$ is countable.
\end{theorem}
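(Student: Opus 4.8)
The plan is to prove countability directly, by stratifying $\mathrm{Mod}(X_{E(\omega)})$ according to maximal dilatation and showing each stratum is finite; the hypothesis on $\omega$ enters only through a single geometric growth estimate. First I would fix the combinatorial skeleton. For each $n$ and $1\le i\le 2^n$ let $\gamma_n^i$ denote the simple closed geodesic on $X_{E(\omega)}$ enclosing the sub-Cantor set $E(\omega)\cap I_n^i$. The family $\{\gamma_n^i\}$ is a pants decomposition whose adjacency graph is the infinite binary tree $\mathcal{T}$: the curve $\gamma_n^i$ together with its two children $\gamma_{n+1}^{2i-1},\gamma_{n+1}^{2i}$ bounds a pair of pants $P_n^i$, and the two child cuffs have \emph{equal} hyperbolic length because the sets $E(\omega)\cap I_{n+1}^{2i-1}$ and $E(\omega)\cap I_{n+1}^{2i}$ are congruent. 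Since $X_{E(\omega)}$ is planar, a mapping class is determined by the induced permutation of this pants decomposition, that is, by an automorphism of $\mathcal{T}$ together with its Fenchel--Nielsen twist data along the cuffs.

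The geometric heart is the estimate $\ell(\gamma_n^i)\to\infty$ as $n\to\infty$, uniformly in $i$, under the hypothesis $q_n\cdot\log(\log(1/q_{n+1}))\to\infty$. I would extract this from the extremal-length and modulus estimates for complements of generalized Cantor sets developed in \cite{Kinjo4}: the curve $\gamma_n^i$ separates $E(\omega)\cap I_n^i$ from the congruent sibling set across a gap whose ratio to $|I_n^i|$ is comparable to $q_n$, and the modulus of the separating annulus — hence a quantity comparable to $1/\ell(\gamma_n^i)$ for short curves — is controlled from above by an expression tending to $0$ exactly when $q_n\log(\log(1/q_{n+1}))\to\infty$. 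In particular only finitely many $\gamma_n^i$ have length at most any given $M$; since curves enclosing larger clopen subsets of $E(\omega)$ are longer still, this yields condition (1) of Proposition \ref{prop1}. (Condition (2) holds automatically here: $E(\omega)$ is totally disconnected, so $X_{E(\omega)}$ has no boundary at infinity and its Fuchsian group is of the first kind; this is a consistency check rather than an input to the argument below.)

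Next comes the dilatation-cost dichotomy. Fix an integer $K\ge 1$ and let $[g]\in\mathrm{Mod}(X_{E(\omega)})$ have a $K$-quasiconformal representative. By Wolpert's inequality $g$ distorts every geodesic length by at most the factor $K$, so $g$ permutes the finite set of geodesics of length $\le KM$ and, by the length growth, preserves the level of $\gamma_n^i$ for all $n$ beyond some $N=N(K)$, acting on that range by a subtree automorphism. I would then quantify the cost of the two admissible moves via the collar lemma: because $\ell(\gamma_n^i)\to\infty$, the embedded collar of $\gamma_n^i$ has width tending to $0$, so both (i) a nonzero power of a Dehn twist along $\gamma_n^i$ and (ii) swapping the two subtrees beneath $\gamma_n^i$ — which, since the isometric involution of $P_n^i$ rotates $\gamma_n^i$ by a definite fraction of its length, forces that rotation to be absorbed within the thin collar — each require maximal dilatation bounded below by a quantity $K_n\to\infty$. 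Hence a $K$-quasiconformal class may twist only along the finitely many cuffs with $K_n\le K$, by bounded powers, and may permute subtrees only at those finitely many levels.

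Consequently $[g]$ is determined by its restriction to the finite-type subsurface cut off by the level-$N$ curves, together with bounded twist data, and the set of $K$-quasiconformal classes of a fixed finite-type surface is finite (the Teichm\"uller ball of radius $\log K$ meets the properly discontinuous modular-group orbit in finitely many points). Thus $\{[g]:[g]\text{ has a }K\text{-quasiconformal representative}\}$ is finite for each $K$, and $\mathrm{Mod}(X_{E(\omega)})=\bigcup_{K\in\mathbb{N}}\{[g]:[g]\text{ has a }K\text{-quasiconformal representative}\}$ is a countable union of finite sets, hence countable. I expect the main obstacle to be the geometric estimate of the second paragraph, namely extracting $\ell(\gamma_n^i)\to\infty$ with the precise role of $\log(\log(1/q_{n+1}))$ from the conformal geometry of the Cantor-set complement, together with making the collar-based lower bounds $K_n\to\infty$ for twists and subtree swaps fully quantitative.
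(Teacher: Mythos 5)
Your overall architecture is the same as the paper's: stratify $\mathrm{Mod}(X_{E(\omega)})$ by the dilatation $K$ of a representative, show that beyond some level $N(K)$ a $K$-quasiconformal class can neither twist along the cuffs nor rearrange the pants structure, and reduce each stratum to the modular group of the finite-type surface $X_N$. Your treatment of twists and of subtree swaps via the half-twist cost is essentially Lemma \ref{lemma3.5} (the paper uses Matsuzaki's explicit bound $K(f)\ge\sqrt{\{(2|n|-1)\ell/\pi\}^2+1}$ rather than collar widths, but the idea is the same, and it needs only $\ell_{X_{E(\omega)}}([\gamma_n^i])\to\infty$, which already follows from $q_n\to 0$ by Lemma \ref{lemma2.2}).

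The genuine gap is in your third paragraph, at the claim that Wolpert's inequality plus length growth forces $g$ to ``preserve the level of $\gamma_n^i$'' and act by a subtree automorphism. Wolpert only bounds the length of the geodesic representative $[g(\gamma_n^i)]$; it does not say that this geodesic is one of the cuffs of the natural pants decomposition, and there are many non-cuff simple closed geodesics (including curves crossing several pairs of pants) whose lengths are not excluded by length comparison alone. The paper closes exactly this gap with two further estimates from \cite{Kinjo4}: the ratio $\ell_{X_{E(\omega)}}([\gamma_{n+1}^j])/\ell_{X_{E(\omega)}}([\gamma_n^i])\to\infty$ (Lemma \ref{lemma2.3}) and, crucially, $d([\gamma_n^i],s_n^i)/\ell_{X_{E(\omega)}}([\gamma_n^i])\to\infty$ where $s_n^i$ is the seam of $P_n^i$ (Lemma \ref{lemma2.4}); then any $[g(\gamma_n)]$ crossing $\partial X_n$ must cross some $[\gamma_{n+m}^j]$ together with its seam, hence has length exceeding $K\ell_{X_{E(\omega)}}([\gamma_n^i])$, contradicting Wolpert. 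This is where the hypothesis $q_n\log(\log(1/q_{n+1}))\to\infty$ actually enters --- not, as your second paragraph suggests, in proving $\ell(\gamma_n^i)\to\infty$. Two smaller points: your assertion that condition (2) of Proposition \ref{prop1} is automatic because $E(\omega)$ is totally disconnected is false --- Matsuzaki's surface $S$ is planar with a Cantor set of ends, yet its Fuchsian group is of the second kind, which is why Proposition \ref{prop2} needs a genuine argument --- though this does not affect your proof since Proposition \ref{prop1} is not an input to it; and your finiteness claim for each stratum must be phrased for the \emph{reduced} modular group of the bordered surface $X_N$ (rel-boundary classes form an uncountable group), whereas countability of $\mathrm{Mod}^{\sharp}(X_N)$ already suffices for the conclusion.
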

The sequence $\omega$ satisfying Theorem \ref{thm1} converges to $0$ very rapidly. The following is an example of such sequences which is a little different from Example 1.2 of \cite{Kinjo4}. 
\begin{example}
Take a sequence $\omega=\{ q_n \}_{n=1}^{\infty}$ so that $q_{n+1} = 1/\exp (n^{1/q_n})$. Then
\[ q_n \cdot \log(\log (1/q_{n+1})) = q_n \cdot (1/q_n) \log n = \log n \to \infty \]
as $n \to \infty$.
\end{example}

The merit of $X_{E(\omega)}$ is the  following:

\begin{prop}\label{prop2}
For any $\omega$, the Fuchsian group $\Gamma$ corresponding to $X_{E(\omega)}$ is of the first kind.
\end{prop}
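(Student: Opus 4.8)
The plan is to argue by contraposition, assuming that $\Gamma$ is of the second kind and deriving a contradiction from the geometry of the generalized Cantor set. First I would record the two features of $E(\omega)$ that do all the work: for every sequence $\omega$ the maximal length of the surviving intervals at stage $n$ equals $\prod_{k=1}^{n}\tfrac{1-q_k}{2}\le 2^{-n}\to 0$, so $E(\omega)$ is a genuine Cantor set --- it is compact, perfect, totally disconnected, and nowhere dense as a subset of $\mathbb{R}\subset\hat{\mathbb{C}}$. Writing $\pi:\mathbb{D}\to X_{E(\omega)}=\hat{\mathbb{C}}\setminus E(\omega)$ for the universal covering with deck group $\Gamma$, the assumption that $\Gamma$ is of the second kind means $\Lambda(\Gamma)\subsetneq\partial\mathbb{D}$; since $\Lambda(\Gamma)$ is closed, its complement contains an open arc $J\subset\partial\mathbb{D}\setminus\Lambda(\Gamma)$. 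My goal is to rule out the existence of such a $J$.

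Next I would establish that $\pi$ extends continuously across $J$ with boundary values in $E(\omega)$. Fix $\zeta\in J$. Since $\zeta\notin\Lambda(\Gamma)$ and $\Gamma$ is torsion free, $\zeta$ is fixed by no nontrivial element of $\Gamma$ and $\Gamma$ acts properly discontinuously near $\zeta$, so $\pi$ is injective on a small half-disk $U\cap\mathbb{D}$. A standard escaping argument then shows that the cluster set of $\pi$ at $\zeta$ avoids the interior of $X_{E(\omega)}$: if $\pi(z_n)\to p\in\hat{\mathbb{C}}\setminus E(\omega)$ for some $z_n\to\zeta$, then evenly covering $p$ would force the corresponding deck transformations to diverge and $\zeta$ to lie in $\Lambda(\Gamma)$, a contradiction. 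Thus the cluster set of $\pi$ at $\zeta$ is a nonempty compact connected subset of $E(\omega)$; being connected and contained in the totally disconnected set $E(\omega)$, it reduces to a single point. Hence $\pi$ extends continuously to $\mathbb{D}\cup J$ with $\pi(J)\subset E(\omega)\subset\mathbb{R}$.

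Finally I would invoke the Schwarz reflection principle. After a M\"obius change of coordinates sending a subarc of $J$ to a real segment and $\mathbb{D}$ to the upper half-plane, $\pi$ becomes a holomorphic function on a half-disk, continuous up to the real segment and taking real values there, and therefore it continues holomorphically across that segment. The extension is nonconstant, since otherwise $\pi$ itself would be constant by the identity theorem; consequently its restriction to $J$ is a nonconstant real-analytic arc whose image is a nondegenerate subinterval of $\mathbb{R}$. But this interval is contained in $E(\omega)$, contradicting the fact that $E(\omega)$ is nowhere dense in $\mathbb{R}$. Hence no free boundary arc $J$ can exist, $\Lambda(\Gamma)=\partial\mathbb{D}$, and $\Gamma$ is of the first kind.

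I expect the main obstacle to be the continuous boundary extension in the middle step: the reflection at the end is routine once it applies, but one must argue carefully that the covering map possesses genuine single-valued boundary values on $J$ rather than merely a nontrivial cluster set, and it is precisely the total disconnectedness of $E(\omega)$ --- forcing the connected cluster set to degenerate to a point --- that supplies this. An alternative would be to recast the whole argument through the bordered surface $(\overline{\mathbb{D}}\setminus\Lambda(\Gamma))/\Gamma$ together with the geodesic-completeness criterion of \cite{Basmajian-saric}, but the reflection argument above has the advantage of exploiting the special position $E(\omega)\subset\mathbb{R}$ directly.
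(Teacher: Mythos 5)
Your proof is correct, but it takes a genuinely different route from the paper's. You argue on the boundary circle: a free arc $J\subset\partial\mathbb{D}\setminus\Lambda(\Gamma)$, a cluster-set argument giving $\pi$ single-valued boundary values in $E(\omega)$ along $J$ (total disconnectedness collapsing the connected cluster set to a point), and Schwarz reflection forcing $E(\omega)$ to contain a nondegenerate interval, contradicting nowhere density. The paper instead argues inside the surface: assuming $\Gamma$ is of the second kind, it produces a path $r_p$ of finite hyperbolic length escaping through infinitely many pants of the natural decomposition, notes that the crossed cuffs shrink (again by total disconnectedness) to a single point $p_\infty\in E(\omega)$, and reaches a contradiction by comparing the hyperbolic metric of $X_{E(\omega)}$ with that of the thrice-punctured sphere $W=\hat{\mathbb{C}}\setminus\{p_\infty,p_1,p_2\}\supset X_{E(\omega)}$, in which any path tending to the cusp $p_\infty$ has infinite length. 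Your approach is purely function-theoretic, bypasses the pants decomposition and the geodesic-completeness input from \cite{Basmajian-saric}, and the key step you correctly identify as delicate (genuine boundary values rather than a mere cluster set) is handled properly. Two small remarks: you should note that the cluster set cannot contain $\infty$ (it lies in $E(\omega)\subset[0,1]$, since $\infty\in X_{E(\omega)}$), so the reflection takes place in a bounded chart; and your final step can be shortened, since $\pi(J)$ is a connected subset of the totally disconnected set $E(\omega)$ and hence a single point $c$, after which reflecting $\pi-c$ shows $\pi$ is constant, which is absurd for a covering map --- this streamlined ending uses only that $E(\omega)$ is totally disconnected and lies on an analytic arc. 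What the paper's route buys is consistency with the hyperbolic-geometric toolkit used elsewhere in the article and the explicit quantitative fact that every path in $X_{E(\omega)}$ converging to a point of $E(\omega)$ has infinite hyperbolic length.
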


By the property, we can construct the analytically infinite Riemann surface for which the Teichm\"uller modular group is countable without caring about twist of boundary geodesics of pairs of pants. In section 2, we prove Theorem \ref{thm0} and Proposition \ref{prop2}. In section 3, we prove Theorem \ref{thm1}.

\textbf{Acknowledgement.} The author would like to thank Professor H. Shiga for his valuable comments at the seminar.

%%%%%%%%%%%%%%%% section 2 %%%%%%%%%

\section{Proofs of Theorem \ref{thm0} and Proposition \ref{prop2}}

We decompose $X_{E(\omega)}$ into pairs of pants as we (\S 2 of \cite{Kinjo4}) or Shiga (\S 3 of \cite{Shiga2}) did. Recall that for a sequence $\omega=\{ q_n \}_{n=1}^{\infty}$, the generalized Cantor set $E(\omega)$ is $\bigcap_{n=1}^{\infty} E_n$, where $E_n$ is the union of closed intervals $\{ I_n^{i} \}_{i=1}^{2^n}$ in $I=[0,1]$ ($n=1,2,...$). Now, for each $n \in \mathbb{N}$ and each $i \in \{1,...,2^n \}$, let $\gamma_n^i$ be a simple closed curve separating $I_n^i$ from other intervals $I_n^{i'}$, $i' \in \{1,...,2^n \} \setminus \{i \}$. Then, in $E(\omega)$, let $[\gamma_n^i]$ be the simple closed geodesic which is freely homotopic to $\gamma_n^i$, where $[\gamma_1^1] = [\gamma_1^2]$, so we write $[\gamma_1]$ for the geodesic. Let $P_1^i$ be a pair of pants with boundary geodesics $[\gamma_1]$, $[\gamma_2^{2i-1}]$ and $[\gamma_2^{2i}]$ $(i=1,2)$. And for each $n \in \mathbb{N}$ and $i \in \{ 1,...,2^n \}$, let $P_n^i$ be a pair of pants with boundary geodesics $[\gamma_n^i]$, $[\gamma_{n+1}^{2i-1}]$ and $[\gamma_{n+1}^{2i}]$. (See Figure \ref{pd2024}.) Then we obtain a pants decomposition: $X_{E(\omega)}= \bigcup_{n=1}^{\infty} ( \bigcup_{i=1} ^{2^n} P_n^i )$. We call this a natural pants decomposition of $X_{E(\omega)}$.

\begin{figure}[h]
\centering
\includegraphics[width=11 cm,bb=0 0 825 312]{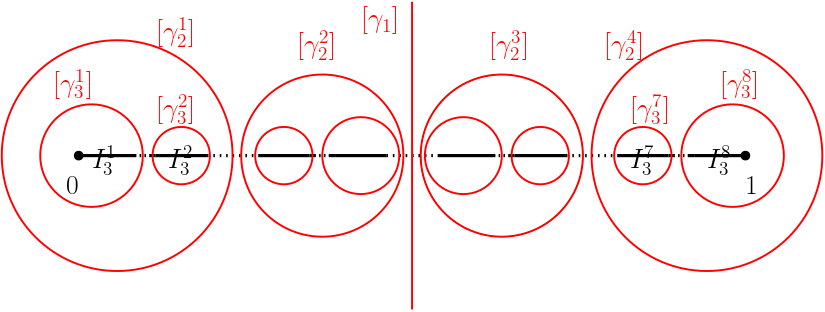}\\
\caption{Pairs of pants $\bigcup_{n=1}^3 ( \bigcup_{i=1} ^{2^n} P_n^i )$}
\label{pd2024}
\end{figure}   

To prove Theorem \ref{thm0}, we use the lemma below, which is a part of some lemma in our previous paper \cite{Kinjo4}. Here, $\ell_{X} (\gamma)$ means the hyperbolic length of a curve $\gamma$ on a hyperbolic Riemann surface $X$. 

\begin{lemma}[Lemma 2.1 (1) of \cite{Kinjo4}]\label{lemma1}  For any $\omega$ and any $n \ge 1$, 
\[ \ell_{X_{E(\omega)}} ([\gamma_n^1]) < \frac{\pi^2}{\tanh^{-1} q_n} \]
holds.
\end{lemma}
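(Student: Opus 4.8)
The plan is to trap the geodesic $[\gamma_n^1]$ inside a concrete round annulus that is disjoint from $E(\omega)$, and then to read off the length bound from the elementary hyperbolic geometry of that annulus together with the monotonicity (Schwarz--Pick) of the hyperbolic metric under inclusion of domains. The guiding principle is that if $A$ is a round annulus contained in $X_{E(\omega)}$ whose core is freely homotopic to $\gamma_n^1$, then $\ell_{X_{E(\omega)}}([\gamma_n^1]) \le \ell_{X_{E(\omega)}}(\mathrm{core}) \le \ell_A(\mathrm{core})$, and the last quantity is explicitly computable.

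First I would record the Euclidean positions of the relevant intervals. Since $I_n^1$ is always the leftmost piece, one has $I_n^1 = [0,a]$ with $a = |I_n^1| = \tfrac{1-q_n}{2}|I_{n-1}^1|$, while the gap of length $q_n |I_{n-1}^1|$ removed from $I_{n-1}^1$ pushes the sibling interval $I_n^2$ out to start at $b := a + q_n|I_{n-1}^1| = \tfrac{1+q_n}{2}|I_{n-1}^1|$ (with the convention $|I_0^1| = 1$ for $n=1$). Consequently $E(\omega)\cap I_n^1 \subset [0,a]$ and $E(\omega)\setminus I_n^1 \subset [b,1]$. The key geometric observation is then that the round annulus $A := \{\, z : a < |z| < b \,\}$ contains no point of $E(\omega)$: since $E(\omega)\subset[0,1]$ lies on the real axis, the only real points with $a < |x| < b$ lie either in the open gap $(a,b)$ or in $(-b,-a)$, and neither meets $E(\omega)$. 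Hence $A$ is a subdomain of $X_{E(\omega)}$, and its core circle $\{|z| = \sqrt{ab}\}$ separates $E(\omega)\cap I_n^1$ from $E(\omega)\setminus I_n^1$, so in $X_{E(\omega)}$ this core is freely homotopic to $\gamma_n^1$.

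It remains to compute and compare. The annulus $A$ has modulus $\tfrac{1}{2\pi}\log(b/a)$, and a direct computation gives $b/a = \tfrac{1+q_n}{1-q_n}$, so its core geodesic has hyperbolic length $\tfrac{2\pi^2}{\log(b/a)} = \tfrac{\pi^2}{\tanh^{-1} q_n}$, where I use $\tanh^{-1} q = \tfrac12\log\tfrac{1+q}{1-q}$. Finally, since $A$ is a \emph{proper} subdomain of $X_{E(\omega)}$, the inclusion is holomorphic but not a covering, so the strict Schwarz--Pick inequality yields $\rho_{X_{E(\omega)}} < \rho_A$ on $A$; integrating over the core and using that $[\gamma_n^1]$ is the length-minimizer in its free homotopy class gives $\ell_{X_{E(\omega)}}([\gamma_n^1]) \le \ell_{X_{E(\omega)}}(\mathrm{core}) < \ell_A(\mathrm{core}) = \pi^2/\tanh^{-1}q_n$, which is the desired strict inequality.

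The only genuinely delicate part is the bookkeeping in the middle step: one must verify that $I_n^1$ really is the leftmost interval (so that centering the annulus at $0$ is legitimate), that the endpoints $a,b \in E(\omega)$ sit exactly on the two boundary circles and are therefore harmlessly excluded from the \emph{open} annulus, and that no other component of $E(\omega)$ intrudes into $\{a<|z|<b\}$. Once this separation picture is pinned down, the remaining ingredients---the modulus of a round annulus, the length of its core geodesic, and the Schwarz--Pick comparison---are entirely standard, so I expect no analytic obstacle beyond the geometric verification.
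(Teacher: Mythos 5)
Your argument is correct: the round annulus $\{a<|z|<b\}$ centered at $0$ with $a=\tfrac{1-q_n}{2}|I_{n-1}^1|$ and $b=\tfrac{1+q_n}{2}|I_{n-1}^1|$ does avoid $E(\omega)$, its core is freely homotopic to $\gamma_n^1$, and the modulus computation plus Schwarz--Pick gives exactly $\pi^2/\tanh^{-1}q_n$. The paper itself only cites this estimate from Lemma 2.1(1) of the earlier work rather than proving it, and your annulus-and-modulus argument is the standard route used there, so there is nothing to flag.
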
   

\begin{remark}
For each $n$, there are $2^n$ simple closed geodesics $\{ [\gamma_n^i]\}_{i=1}^{2^n}$ as the $n$-th geodesic, and in the case where $2 \le i \le 2^{n-1}$, $\ell_{X_{E(\omega)}} ([\gamma_n^i]) < \pi^2/\tanh^{-1} q_n$ does not hold, in general. (cf. Lemma 2.1 (2) of \cite{Kinjo4}.) However, if $\omega$ is monotonic decreasing, then $\ell_{X_{E(\omega)}} ([\gamma_n^i]) < \pi^2/\tanh^{-1} q_n$ holds for any $n \in \mathbb{N}$ and $i \in \{1,...,2^n \}$.  (cf. Remark 3 of \cite{Kinjo4}.) 
\end{remark}

From the above, Theorem \ref{thm0} is immediately proved. 

\begin{proof}[Proof of Theorem \ref{thm0}]
By Lemma \ref{lemma1}, if there exists a subsequence $\{ q_{n(k)} \}_{k=1}^{\infty}$ of $\omega$ such that $q_{n(k)} >c$, then $\ell_{X_{E(\omega)}} ([\gamma_{n(k)}^1]) < \pi^2/\tanh^{-1} c$ for $k =1,2,...$ Hence, by Proposition \ref{prop1} (1), Mod$(X_{E(\omega)})$ is uncountable.
\end{proof}

Before proving Theorem \ref{thm1}, we check Proposition \ref{prop2}. The following proof is based on the idea of Professor H. Shiga (because the original proof the author wrote turned out to be wrong at the seminar).

\begin{proof}[Proof of Proposition \ref{prop2}]
Assume that the Fuchsian group $\Gamma$ is of the second kind for some $\omega$. Then, for any point $p \in \mathbb{D}$, there exists a geodesic ray $\hat{r}_p$ in $\mathbb{D}$ starting at $p$ such that $\ell_{\mathbb{D}} (\hat{r}_p)$ is infinite, but $\ell_{X_{E(\omega)}} (\pi (\hat{r}_p) \cap X_{E(\omega)})$ is finite, where $\pi : \mathbb{D} \to \mathbb{D}/\Gamma$ is the the universal covering. Put $r_p := \pi (\hat{r}_p)$. Let $\{ P_n^{*} \}_{n=k_p}^{\infty}$ be a family of the pants (of the natural pants decomposition) containing $r_p$ and $\{ \gamma_n\}_{n=k_p}^{\infty}$ be a component of each $\partial P_n^{*}$ intersecting $r_p$. Here, $E(\omega)$ is totally disconnected, hence the diameter of $\gamma_n$ in $\mathbb{C}$ converges to $0$ as $n \to \infty$,  and there exists a point $p_{\infty} \in E(\omega)$ such that $\gamma_n \to p_{\infty}$. Now take points $p_1 , p_2 \in E(\omega)$ which are contained in $\hat{\mathbb{C}} \setminus \bigcup_{n= k_p}^{\infty} P_n^{*}$ and put $W:=\hat{\mathbb{C}} \setminus \{p_{\infty}, p_1 , p_2 \}$. Since $r_p$ goes to $p_{\infty}$, we have $\ell_{W} (r_p) = \infty$. However, $X_{E(\omega)} \subset W$, so $\ell_{X_{E(\omega)}} (r_p) \ge \ell_{W} (r_p) $ and this is a contradiction.  
\end{proof}

%%%%%%%%%%%% section 3 %%%%%%%%%%%%%%%

\section{Proof of Theorem \ref{thm1}} 

Let $\omega =\{ q_n \}_n^{\infty}$ be a sequence of numbers in $(0,1)$, and let $[ \gamma_n^i]$ and $ P_n^i$ ($n \in \mathbb{N}$, $i \in \{ 1,...,2^n \}$) be a closed geodesic and a pair of pants of $X_{E(\omega)}$ taken in Section 2, respectively. For each $n \in \mathbb{N}$, we take the subsurface $X_n := \bigcup_{k=1}^n (\bigcup_{i=1}^{2^k} P_k^i)$ of $X_{E(\omega)}$. Firstly, we show the following lemma.

\begin{lemma}\label{lemma3.1}
Let $\omega$ be the sequence satisfying the condition of Theorem \ref{thm1}. Then, for any $K$-quasiconformal automorphism $g: X_{E(\omega)} \to X_{E(\omega)}$, there exists $n_1 \in \mathbb{N}$ such that if $n \ge n_1$, then the image $g(X_n) $ of $X_n$ is freely homotopic to $X_n$ in $X_{E(\omega)}$, that is, each component of $\partial g(X_n)$ is homotopic to some component of $\partial X_n$.
\end{lemma}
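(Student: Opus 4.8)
The plan is to argue entirely in terms of boundary curve systems. Since $X_n=\bigcup_{k=1}^n\bigcup_{i=1}^{2^k}P_k^i$ is a finite union of pairs of pants it is compact, and its boundary is exactly the system of level-$(n+1)$ geodesics $\partial X_n=\{[\gamma_{n+1}^j]\}_{j=1}^{2^{n+1}}$. As $g$ is a homeomorphism, $g(X_n)$ is a compact connected subsurface whose boundary curves $g([\gamma_{n+1}^j])$ are pairwise disjoint, so their geodesic representatives are pairwise disjoint or coincide. Hence $g(X_n)\sim X_n$ is equivalent to the assertion that each $[g(\gamma_{n+1}^j)]$ is freely homotopic to a level-$(n+1)$ geodesic $[\gamma_{n+1}^{j'}]$; since $g$ is injective on homotopy classes and the two systems have the same cardinality, $j\mapsto j'$ is then automatically a permutation. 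Thus the lemma reduces to showing that, beyond some level, $g$ sends level geodesics to level geodesics of the same level.

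The main tool is the quasiconformal length-distortion inequality: for every simple closed geodesic $\alpha$ one has $K^{-1}\ell_{X_{E(\omega)}}(\alpha)\le\ell_{X_{E(\omega)}}([g(\alpha)])\le K\,\ell_{X_{E(\omega)}}(\alpha)$ with $K=K(g)$. First I would observe that the hypothesis of Theorem~\ref{thm1} forces $q_n\to0$ (indeed $\log(\log(1/q_{n+1}))\to\infty$ is needed for the product to diverge) and, with the rapid decay it imposes, that $\{q_m\}$ is eventually monotone decreasing; by the Remark following Lemma~\ref{lemma1} the bound $\ell_{X_{E(\omega)}}([\gamma_m^i])<\pi^2/\tanh^{-1}(q_m)$ then holds for all $i$ at large $m$. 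Combining this upper bound at level $m$ with a lower bound at level $m+1$ coming from a collar/modulus estimate for the annulus separating the cluster $E(\omega)\cap I_{m+1}^i$ from the rest, the condition $q_n\log(\log(1/q_{n+1}))\to\infty$ is precisely what guarantees that the band of level-$(n+1)$ lengths eventually exceeds $K^2$ times the band of level-$n$ lengths. Fixing thresholds $L_m$ strictly between consecutive bands and letting $G_m$ be the set of all simple closed geodesics of length at most $L_m$, the distortion inequality yields the near-invariance $g(G_{m-1})\subset G_m$ and $g^{-1}(G_{m-1})\subset G_m$ for all large $m$.

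Granting that $G_m$ is exactly the family of level geodesics up to level $m$, that is $G_m=\{[\gamma_{m'}^i]:1\le m'\le m,\ 1\le i\le 2^{m'}\}$, the proof finishes by counting: $G_m\setminus G_{m-1}$ consists of the $2^m$ level-$m$ geodesics, $g$ is injective on homotopy classes, and the two nesting relations above then force $g$ to map level-$m$ geodesics bijectively onto level-$m$ geodesics for every $m\ge n_1$. Taking $m=n+1$ shows that $g$ permutes $\partial X_n$, i.e. $g(X_n)\sim X_n$, for all $n\ge n_1$, which is the assertion.

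The hard part will be the granting step: ruling out exotic short geodesics and showing that the homeomorphism of $E(\omega)$ induced by $g$ preserves the level-$m$ clustering from some level on. A simple closed geodesic enclosing a contiguous block of level-$m$ clusters that is not a single subtree must pass through two gaps while enclosing a comparatively large set, and the modulus estimate should show such a curve is strictly longer than the genuine level geodesics sharing its band; making this quantitative, and verifying that the factor-$K$ quasiconformality---read as quasisymmetry of the induced map on $E(\omega)$---cannot reshuffle clusters whose gap-ratios sit at the hugely separated scales produced by $q_n\log(\log(1/q_{n+1}))\to\infty$, is the crux. This is exactly where the topological freedom of a self-homeomorphism of a Cantor set is overcome by the metric rigidity of a quasiconformal map, and where the precise form of the hypothesis is indispensable.
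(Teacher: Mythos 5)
Your overall strategy (separate the lengths of consecutive levels of cuff geodesics by a factor bigger than $K^2$ and then use Wolpert's inequality plus injectivity to force $g$ to permute each level) is a genuinely different route from the paper's, but it rests on a step you yourself flag as unproven, and that step is the entire difficulty. Your counting argument needs the identity $G_m=\{[\gamma_{m'}^i]:m'\le m\}$, i.e.\ that \emph{every} simple closed geodesic of length at most $L_m$ is a level geodesic. Without this, $[g(\gamma_{n+1}^j)]$ could a priori be an ``exotic'' short geodesic (e.g.\ one enclosing a non-subtree block of clusters, or one winding through several pairs of pants) whose length happens to lie in the level-$(n+1)$ band, and the nesting relations $g(G_{m-1})\subset G_m$, $g^{-1}(G_{m-1})\subset G_m$ tell you nothing. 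You gesture at a modulus estimate and at quasisymmetry of the induced map on $E(\omega)$, but no such classification of short geodesics is carried out (nor is it available in the paper), so the argument does not close.

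The paper avoids this classification entirely by localizing inside single pairs of pants. It uses Lemma~\ref{lemma2.4}: the distance $d([\gamma_m^j],s_m^j)$ from a cuff to the seam $s_m^j$ of $P_m^j$ satisfies $d([\gamma_m^j],s_m^j)>K\,\ell_{X_{E(\omega)}}([\gamma_m^j])$ for all large $m$. If some component $\gamma_n$ of $\partial X_n$ had $[g(\gamma_n)]$ essentially crossing $\partial X_n$, then $[g(\gamma_n)]$ would have to cross both a cuff $[\gamma_{n+m}^j]$ and the seam $s_{n+m}^j$ of some pair of pants, hence
\[
\ell_{X_{E(\omega)}}([g(\gamma_n)])>d([\gamma_{n+m}^j],s_{n+m}^j)>K\,\ell_{X_{E(\omega)}}([\gamma_{n+m}^j])\ge K\,\ell_{X_{E(\omega)}}([\gamma_{n+1}^i]),
\]
using Lemma~\ref{lemma2.3} for the last inequality; this contradicts Wolpert's bound $\ell([g(\gamma_n)])\le K\ell([\gamma_n])$ directly, with no need to know what the short geodesics are. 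If you want to salvage your approach, you would either have to prove the classification of short simple closed geodesics (a substantial new lemma), or replace the ``band'' sets $G_m$ by an argument of the paper's type; as written, the proposal has a genuine gap.
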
  

To prove this, we use Lemma \ref{lemma2.3}, (\cite{Kinjo4}), Lemma \ref{lemma2.4} (\cite{Kinjo4}) and Lemma \ref{wol} (Wolpert's Lemma).  

\begin{lemma}[Lemma 2.3 of \cite{Kinjo4}]\label{lemma2.3}
Let $\omega$ be the sequence satisfying the condition of Theorem \ref{thm1}. Then,  for any $i \in \{ 1,...,2^n \}$ and $j \in \{ 1,...,2^{n+1} \}$,
\[ \frac{\ell_{X_{E(\omega)}} ([\gamma_{n+1}^j])}{\ell_{X_{E(\omega)}} ([\gamma_{n}^i])} \to \infty\]
as $n \to \infty$.
\end{lemma}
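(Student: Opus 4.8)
The plan is to prove the statement by sandwiching: obtain an upper bound for the denominator $\ell_{X_{E(\omega)}}([\gamma_n^i])$ valid for \emph{all} $i$, a lower bound for the numerator $\ell_{X_{E(\omega)}}([\gamma_{n+1}^j])$ valid for \emph{all} $j$, and then let the hypothesis $q_n\log\log(1/q_{n+1})\to\infty$ force the quotient to diverge.

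First I would record that the hypothesis forces $q_{n+1}\to 0$ so rapidly that $q_{n+1}<q_n$ for all large $n$; hence $\omega$ is eventually monotone decreasing, and by the monotone case of Lemma \ref{lemma1} (cf. the Remark above) the clean bound $\ell_{X_{E(\omega)}}([\gamma_n^i])<\pi^2/\tanh^{-1}(q_n)$ holds for every $i\in\{1,\dots,2^n\}$ once $n$ is large. Since $\tanh^{-1}(x)\ge x$, this gives $\ell_{X_{E(\omega)}}([\gamma_n^i])<\pi^2/q_n$ uniformly in $i$.

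The heart of the argument is the lower bound on the numerator. Each interval $I_{n+1}^j$ is adjacent to exactly one level-$(n+1)$ gap, namely the gap of length $q_{n+1}|I_n^1|$ separating it from its sibling; since $|I_{n+1}^j|=(1-q_{n+1})|I_n^1|/2$, the nearest other component of $E(\omega)$ lies at relative distance $\asymp q_{n+1}$ from the continuum $E(\omega)\cap I_{n+1}^j$ (whose diameter equals $|I_{n+1}^j|$, the endpoints of the interval belonging to $E(\omega)$). Any ring domain in the homotopy class of $[\gamma_{n+1}^j]$ must separate this continuum from its close sibling, so by the Teichm\"uller ring (extremal modulus) estimate its modulus is at most $C/\log(1/q_{n+1})$ for large $n$, uniformly in $j$. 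Hence the maximal modulus $m_j$ of an embedded annulus homotopic to $[\gamma_{n+1}^j]$ satisfies $m_j\le C/\log(1/q_{n+1})$. Comparing extremal and hyperbolic length (Maskit's inequalities), a small maximal modulus forces a long geodesic: $\ell_{X_{E(\omega)}}([\gamma_{n+1}^j])\gtrsim \log(1/m_j)\gtrsim \log\log(1/q_{n+1})$, again uniformly in $j$. This is precisely where the iterated logarithm in the hypothesis originates.

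Combining the two estimates yields, for all large $n$ and all admissible $i,j$,
\[ \frac{\ell_{X_{E(\omega)}}([\gamma_{n+1}^j])}{\ell_{X_{E(\omega)}}([\gamma_n^i])}\gtrsim \frac{\log\log(1/q_{n+1})}{\pi^2/q_n}=\frac{1}{\pi^2}\,q_n\log\log(1/q_{n+1}), \]
which tends to $\infty$ by hypothesis, proving the lemma. I expect the main obstacle to be the lower bound step: one must (i) make the modulus estimate genuinely uniform over all $2^{n+1}$ children, handling the boundary intervals $I_{n+1}^1,I_{n+1}^{2^{n+1}}$ near $0$ and $1$ and the fact that the obstruction across the gap is a whole sub-Cantor set rather than a single continuum, and (ii) correctly convert the small maximal modulus into a length lower bound with the right $\log\log$ rate via the reverse comparison between extremal and hyperbolic length. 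The upper bound and the final combination are then routine.
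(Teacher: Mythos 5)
Your argument is correct and is essentially the intended one: the paper itself imports this lemma from \cite{Kinjo4} without proof, but the two estimates you derive are precisely the ones the paper already quotes as ingredients --- your upper bound $\ell_{X_{E(\omega)}}([\gamma_n^i])\lesssim \pi^2/q_n$ is Lemma \ref{lemma1} together with the Remark on the monotone case (and your observation that the hypothesis forces $q_{n+1}<q_n$ eventually is the right way to get uniformity in $i$), while your lower bound $\ell_{X_{E(\omega)}}([\gamma_{n+1}^j])\gtrsim \log\log(1/q_{n+1})$ is exactly the asymptotic content of Lemma \ref{lemma2.2}, since $2\eta\bigl(\pi^2/\log((1+q_{n+1})/(2q_{n+1}))\bigr)\sim 2\log\log(1/q_{n+1})$ as $q_{n+1}\to 0$. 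The only point to make explicit is the one you already flag: in the Teichm\"uller-ring estimate the separated sets need only be replaced by the pairs of endpoints of $I_{n+1}^j$ and of its sibling (all of which lie in $E(\omega)$), so the total disconnectedness of the sub-Cantor sets causes no difficulty, and the conversion from small maximal modulus to large length via the collar/Maskit inequality indeed yields the stated $\log\log$ rate.
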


Below, $s_n^i (\subset \mathbb{R} \cup \{ \infty\})$ is the shortest geodesic segment connecting $[\gamma_{n+1}^{2i-1}]$ and $[\gamma_{n+1}^{2i}]$ in each pair of pants $P_n^i$ with boundary geodesics $\{ [\gamma_n^i], [\gamma_{n+1}^{2i-1}], [\gamma_{n+1}^{2i}]\}$, and $d (\cdot ,\cdot )$ is the hyperbolic distance on $X_{E(\omega)}$.

\begin{lemma}[Lemma 2.4 of \cite{Kinjo4}]\label{lemma2.4}
Let $\omega$ be the sequence satisfying the condition of Theorem \ref{thm1}. Then,  for any $i \in \{ 1,...,2^n \}$,
\[ \frac{d([\gamma_n^i], s_n^i)}{\ell_{X_{E(\omega)}} ([\gamma_{n}^i])} \to \infty\]
as $n \to \infty$.
\end{lemma}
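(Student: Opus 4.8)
The plan is to reduce Lemma \ref{lemma2.4} to a single computation in a right-angled hexagon and then to invoke Lemma \ref{lemma2.3}. First I would record the geometric meaning of the quantities involved. The segment $s_n^i$ is the common perpendicular of the two boundary geodesics $[\gamma_{n+1}^{2i-1}]$ and $[\gamma_{n+1}^{2i}]$ inside the pair of pants $P_n^i$, so it is one of the three seams of $P_n^i$, namely the seam opposite the boundary $[\gamma_n^i]$. Cutting $P_n^i$ along its three seams produces two congruent right-angled hexagons, in each of which $s_n^i$ and (half of) $[\gamma_n^i]$ appear as a pair of opposite sides. Since the three boundary curves of $P_n^i$ are geodesics, $\overline{P_n^i}$ is geodesically convex in $X_{E(\omega)}$, so the distance $d([\gamma_n^i], s_n^i)$ computed in $X_{E(\omega)}$ equals the distance computed inside $P_n^i$, which in turn is the length of the common perpendicular between these two opposite hexagon sides.

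Write $a := \ell_{X_{E(\omega)}}([\gamma_n^i])$ and $b_1, b_2 := \ell_{X_{E(\omega)}}([\gamma_{n+1}^{2i-1}]),\, \ell_{X_{E(\omega)}}([\gamma_{n+1}^{2i}])$. Next I would carry out the hexagon trigonometry. Treating first the symmetric model $b_1 = b_2 = b$, the symmetry axis splits the hexagon into two right-angled pentagons with consecutive sides $d([\gamma_n^i], s_n^i),\ \tfrac12\ell(s_n^i),\ \tfrac{b}{2},\ \sigma,\ \tfrac{a}{4}$, where $\sigma$ is the seam joining $[\gamma_n^i]$ to a child. The pentagon relation $\cosh P_i = \coth P_{i-1}\coth P_{i+1}$ together with the standard seam identity $\sinh\!\big(\tfrac12\ell(s_n^i)\big) = \cosh(a/4)/\sinh(b/2)$ then yields
\[
\cosh d([\gamma_n^i], s_n^i) = \coth(a/4)\,\coth\!\big(\tfrac12\ell(s_n^i)\big) \ge \frac{\sinh(b/2)}{\sinh(a/4)} .
\]
Estimating $\sinh(b/2) \ge \tfrac14 e^{b/2}$ and $\sinh(a/4) \le \tfrac12 e^{a/4}$ gives the clean lower bound
\[
d([\gamma_n^i], s_n^i) \ge \tfrac12\,\ell_{X_{E(\omega)}}([\gamma_{n+1}^{2i-1}]) - \tfrac14\,\ell_{X_{E(\omega)}}([\gamma_n^i]) - \log 2 .
\]
For $b_1 \ne b_2$ the same pentagon bookkeeping applies to the non-symmetric hexagon, with $b$ replaced by the shorter of $b_1, b_2$, giving the same bound up to the renaming.

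Finally I would divide by $\ell_{X_{E(\omega)}}([\gamma_n^i])$:
\[
\frac{d([\gamma_n^i], s_n^i)}{\ell_{X_{E(\omega)}}([\gamma_n^i])} \ge \frac12\,\frac{\ell_{X_{E(\omega)}}([\gamma_{n+1}^{2i-1}])}{\ell_{X_{E(\omega)}}([\gamma_n^i])} - \frac14 - \frac{\log 2}{\ell_{X_{E(\omega)}}([\gamma_n^i])} .
\]
Because $\tfrac12\,\ell_{X_{E(\omega)}}([\gamma_{n+1}^{2i-1}]) - \log 2 \to \infty$ while $\ell_{X_{E(\omega)}}([\gamma_n^i]) > 0$, the last two terms cannot spoil divergence, and the first term tends to $\infty$ by Lemma \ref{lemma2.3}. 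Hence $d([\gamma_n^i], s_n^i)/\ell_{X_{E(\omega)}}([\gamma_n^i]) \to \infty$, as claimed.

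The main obstacle I anticipate is the non-symmetric hexagon estimate: when $b_1 \ne b_2$ the common perpendicular no longer lies on a symmetry axis, so one must justify the reduction to a right-angled pentagon directly and keep uniform control, over all $i$ and all $n$, of the exponentially small seam corrections that were hidden in the symmetric computation. Everything else is either the convexity observation of the first paragraph or a direct consequence of Lemma \ref{lemma2.3}.
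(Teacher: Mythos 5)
The paper does not actually prove this lemma --- it is imported verbatim from Lemma 2.4 of \cite{Kinjo4} --- so there is no in-paper argument to compare yours against. Judged on its own terms, your trigonometric core is sound and is surely the right engine: the pentagon identities you quote are correct, the symmetric computation giving $\cosh d \ge \sinh(b/2)/\sinh(a/4)$ and hence $d \ge b/2 - a/4 - \log 2$ checks out, and dividing by $a=\ell_{X_{E(\omega)}}([\gamma_n^i])$ and invoking Lemma \ref{lemma2.3} (together with Lemma \ref{lemma2.2}, which gives $a \to \infty$ so the additive constants are harmless) finishes that part. The asymmetry issue you flag as the ``main obstacle'' is in fact the easy part: the common perpendicular from the side of length $a/2$ to the opposite side splits the hexagon into two right-angled pentagons in which $\cosh(b_j/2)=\sinh(h)\sinh(x_j)$ with $x_1+x_2=a/2$, so $\sinh h \ge \cosh(b_1/2)/\sinh(a/2)$ with no symmetry assumption.

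The genuine gap is the first paragraph, where you assert that $\overline{P_n^i}$ is ``geodesically convex'' so that the distance in $X_{E(\omega)}$ equals the intrinsic distance in $P_n^i$. A subsurface with geodesic boundary is convex only in the homotopical sense (a geodesic arc homotopic rel endpoints into the subsurface stays in it); it is \emph{not} metrically convex in general, and the shortest path in $X_{E(\omega)}$ between two subsets of $\overline{P_n^i}$ may leave $P_n^i$. This matters concretely here: the statement defines $d(\cdot,\cdot)$ as the hyperbolic distance on $X_{E(\omega)}$, the segment $s_n^i$ terminates at points of the child geodesics $[\gamma_{n+1}^{2i-1}]$ and $[\gamma_{n+1}^{2i}]$, and the \emph{other} two seams of $P_n^i$ (from $[\gamma_n^i]$ to each child) have lengths tending to $0$ under the hypothesis of Theorem \ref{thm1}, since $\cosh \ell(u)=\bigl(\cosh(b_2/2)+\cosh(a/2)\cosh(b_1/2)\bigr)/\bigl(\sinh(a/2)\sinh(b_1/2)\bigr)\to 1$. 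So $[\gamma_n^i]$ comes arbitrarily close to the curves on which the endpoints of $s_n^i$ lie, and a competing path could run along a short seam to a child geodesic and then try to reach the endpoint of $s_n^i$ through the adjacent pants $P_{n+1}^{2i-1}$ rather than through $P_n^i$. Ruling this out (e.g.\ by analyzing how the complementary components of $P_n^i$ are attached, or by proving the needed lower bound only for the essential arcs that actually occur in the application in Lemma \ref{lemma3.1}) is the real content that your appeal to convexity skips, and your hexagon estimate by itself only bounds the intrinsic pants distance, which is an a priori \emph{upper} bound for the distance in $X_{E(\omega)}$, not a lower one.
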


\begin{lemma}[\cite{Wolpert}]\label{wol}
Let $f: X \to X'$ be a $K$-quasiconformal homeomorphism from a hyperbolic Riemann surface $X$ onto another hyperbolic Riemann surface $X'$. And let $\gamma$ be a closed geodesic on $X$ and $[f(\gamma)]$ be the geodesic of the free homotopy class of $f(\gamma)$. Then
\[ \frac{1}{K} \le \frac{\ell_{X'} ([f(\gamma)])}{\ell_{X} ([\gamma])} \le K\]
\end{lemma}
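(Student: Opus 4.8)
).

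The plan is to convert the hyperbolic length of a closed geodesic into the conformal modulus of a canonically associated annulus, and then to exploit the fact that $K$-quasiconformal maps distort moduli of ring domains by at most a factor $K$. Write $X=\mathbb{D}/\Gamma$ and $X'=\mathbb{D}/\Gamma'$, and lift $f$ to a $K$-quasiconformal map $\tilde f:\mathbb{D}\to\mathbb{D}$ conjugating $\Gamma$ to $\Gamma'$ and inducing the isomorphism $f_*:\pi_1(X)\to\pi_1(X')$ of fundamental groups. The geodesic $\gamma$ corresponds to a conjugacy class of a hyperbolic element $g\in\Gamma$ with translation length $\ell_X(\gamma)$, and the free homotopy class of $f(\gamma)$ corresponds to $f_*(g)=g'\in\Gamma'$, whose geodesic representative is $[f(\gamma)]$.

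First I would attach to $\gamma$ its annular (cyclic) cover $A_\gamma:=\mathbb{D}/\langle g\rangle$, which is a genuine annulus whether or not $\gamma$ is simple. Passing to the upper half-plane model so that $g$ acts as $z\mapsto e^{\ell_X(\gamma)}z$ and changing coordinates by $w=\log z$, one sees that $A_\gamma$ is conformally the flat cylinder $\{0<\Im w<\pi\}/(w\mapsto w+\ell_X(\gamma))$, whose modulus is
\[
\operatorname{Mod}(A_\gamma)=\frac{\pi}{\ell_X(\gamma)}.
\]
The identical computation on $X'$ gives $\operatorname{Mod}(A_{[f(\gamma)]})=\pi/\ell_{X'}([f(\gamma)])$.

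Next, since $f_*$ carries $\langle g\rangle$ onto $\langle g'\rangle$, functoriality of covering spaces lets me lift $f$ to a homeomorphism $F:A_\gamma\to A_{[f(\gamma)]}$ between these two cylinders. Because quasiconformality and the dilatation bound are local properties preserved under the holomorphic covering projections, $F$ is again $K$-quasiconformal. The quasi-invariance of the modulus of a ring domain under $K$-quasiconformal maps then yields
\[
\frac{1}{K}\operatorname{Mod}(A_\gamma)\le \operatorname{Mod}(A_{[f(\gamma)]})\le K\,\operatorname{Mod}(A_\gamma).
\]
Substituting the two modulus formulas, cancelling $\pi$, and taking reciprocals (which reverses the inequalities since all quantities are positive) gives exactly $\tfrac1K\le \ell_{X'}([f(\gamma)])/\ell_X(\gamma)\le K$.

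I expect the main obstacle to be the bookkeeping in the middle step rather than any single estimate: one must check carefully that $f$ really does lift to a homeomorphism between the \emph{correct} pair of cyclic covers, i.e. that the cover associated to $g'$ is the one determined by the geodesic representative $[f(\gamma)]$, and that the lift $F$ genuinely inherits the dilatation bound $K$ from $f$. Once this is in place, the modulus formula $\operatorname{Mod}(A_\gamma)=\pi/\ell_X(\gamma)$ and the quasi-invariance of the modulus of a ring domain are standard facts from the theory of quasiconformal mappings, and the inequality follows immediately.
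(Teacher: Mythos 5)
The paper does not prove this lemma at all---it is quoted directly from Wolpert's paper \cite{Wolpert} as a known result. Your argument is correct and is essentially Wolpert's original proof: pass to the cyclic cover $\mathbb{D}/\langle g\rangle$, identify its modulus as $\pi/\ell_X(\gamma)$, lift $f$ to a $K$-quasiconformal map between the annular covers, and apply quasi-invariance of the modulus; the only point left tacit (that $f_*(g)$ is again hyperbolic, so that $[f(\gamma)]$ exists) is itself forced by your modulus bound, since a parabolic quotient cylinder would have infinite modulus.
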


\begin{proof}[Proof of Lemma \ref{lemma3.1}] 
Note that for $K \ge 1$, there exists $n_1 \in \mathbb{N}$ such that if $n \ge n_1$, then
\[ \frac{d([\gamma_n^i], s_n^i)}{\ell_{X_{E(\omega)}} ([\gamma_{n}^i])} > K\]
for any $i \in \{1,...,2^n \}$ by Lemma \ref{lemma2.4}. Now, assume that for any $N \in \mathbb{N}$, $g(X_n)$ is not freely homotopic to $X_n$ in $X_{E(\omega)}$ for some $n \ge N$. Then, for some $n \ge n_1$, $g(X_n)$ is not freely homotopic to $X_n$, so there exists a component $\gamma_n$ of $\partial X_n$ such that $[g(\gamma_n)]$ crosses $\partial X_n$, where $[g(\gamma_n)]$ is the closed geodesic freely homotopic to $g(\gamma_n)$. Then, the length of $[g(\gamma_n)]$ is larger than $d([\gamma_{n+m}^j], s_{n+m}^j)$ for some $m \ge 0$ and $j \in \{1,..., 2^{n+m} \}$ since $[g([\gamma_n])$ crosses $[\gamma_{n+m}^j]$ and $s_{n+m}^j$. Therefore, $\ell_{X_{E(\omega)}} ([g(\gamma_n)]) > d([\gamma_{n+m}^j], s_{n+m}^j) >K \ell_{X_{E(\omega)}} ([\gamma_{n+m}^j]) \ge K \ell_{X_{E(\omega)}} ([\gamma_{n}^i])$ holds by Lemmas \ref{lemma2.4} and \ref{lemma2.3}. It contradicts Lemma \ref{wol}.  
\end{proof}

Next, we consider a Dehn twist about each component of $\partial X_{n}$ for a sufficiently large number $n$.

\begin{lemma}\label{lemma3.5}
Let $\omega$ be the sequence satisfying the condition of Theorem \ref{thm1}. For for any $K$-quasiconformal automorphism $g: X_{E(\omega)} \to X_{E(\omega)}$, there exists $n_2 \in \mathbb{N}$ such that  if $n \ge n_2$, on any component of $\partial X_n$, $g$ does not cause nor a half Dehn twist, a Dehn twist nor multiple twists. 
\end{lemma}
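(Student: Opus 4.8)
The plan is to fix the $K$-quasiconformal automorphism $g$, invoke Lemma \ref{lemma3.1} to pass to $n\ge n_1$, where $g(X_n)$ is freely homotopic to $X_n$, and then show that the twisting $g$ performs about each boundary geodesic of $\partial X_n$ must vanish once $n$ is large. For $n\ge n_1$ the map $g$ carries $\partial X_n$ to a system of geodesics freely homotopic to $\partial X_n$ and respects the adjacency of the pants $P_n^i$, so for every component $[\gamma_{n+1}^j]$ the number of twists $t_{n,j}(g)\in\tfrac12\mathbb{Z}$ that $g$ introduces about it is well defined (the discrepancy, on a collar of $[\gamma_{n+1}^j]$, between $g$ and the untwisted combinatorial model). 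The goal is $t_{n,j}(g)=0$ for all $j$ once $n\ge n_2$.

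The first thing I would record is that the length argument used for Lemma \ref{lemma3.1} does \emph{not} settle this. A twist about $[\gamma_{n+1}^j]$ lengthens a transversal object only by about $|t_{n,j}(g)|\cdot\ell_{X_{E(\omega)}}([\gamma_{n+1}^j])$, whereas every essential closed curve, or geodesic arc with free endpoints, that crosses $[\gamma_{n+1}^j]$ is already forced by Lemma \ref{lemma2.4} to be long compared with $\ell_{X_{E(\omega)}}([\gamma_{n+1}^j])$, since it must surround one of the longer children $[\gamma_{n+2}^{\,\cdot}]$. The shortest transversal, the seam $s_n^i$, is useless because its endpoints lie \emph{on} the curves one twists about, so a twist there is undone by sliding the free endpoint and leaves the class of $s_n^i$ unchanged. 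Hence Wolpert's Lemma \ref{wol} cannot detect a bounded amount of twisting, and a genuinely conformal (extremal-length) input is needed.

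The mechanism I would use instead is the standard bound relating twisting to collar modulus. Let $A_{n+1}^j$ be the maximal embedded annulus in $X_{E(\omega)}$ whose core is freely homotopic to $[\gamma_{n+1}^j]$. Realizing $m$ twists inside an annulus of modulus $M$ forces dilatation increasing with $m/M$; conversely, since $g$ is $K$-quasiconformal it distorts the modulus of $A_{n+1}^j$ by at most the factor $K$, and comparing $g(A_{n+1}^j)$ with the maximal annulus about $[g(\gamma_{n+1}^j)]$ in the target shows that $g$ can twist about $[\gamma_{n+1}^j]$ at most $\psi(K)\cdot\mathrm{mod}(A_{n+1}^j)$ times, for a fixed increasing $\psi$ depending only on $K$. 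By Maskit's comparison of extremal and hyperbolic length, $\mathrm{mod}(A_{n+1}^j)\le \pi/\ell_{X_{E(\omega)}}([\gamma_{n+1}^j])$, so it suffices to prove $\ell_{X_{E(\omega)}}([\gamma_{n+1}^j])\to\infty$ as $n\to\infty$, uniformly in $j$. This I would deduce from the hypothesis: under the condition of Theorem \ref{thm1} the sequence $\omega$ cannot satisfy the condition of Theorem \ref{thm0} (this is the dichotomy (I) versus (II) of \cite{Kinjo4}), so $q_n\to 0$, and then $q_n\cdot\log(\log(1/q_{n+1}))\to\infty$ forces $q_{n+1}\to 0$ very rapidly. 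Each interval $I_{n+1}^j$ is separated from its sibling by a gap of relative size comparable to $q_{n+1}$, which is the \emph{closest} approach of $E(\omega)$ to $I_{n+1}^j$; this pinching yields, uniformly in $j$, a companion lower bound to Lemma \ref{lemma1} of the shape $\ell_{X_{E(\omega)}}([\gamma_{n+1}^j])\gtrsim 1/q_{n+1}\to\infty$, equivalently $\mathrm{mod}(A_{n+1}^j)\to 0$ uniformly in $j$. Choosing $n_2\ge n_1$ so large that $\psi(K)\cdot\mathrm{mod}(A_{n+1}^j)<\tfrac12$ for all $j$ and all $n\ge n_2$ then forces $t_{n,j}(g)=0$, which is the assertion.

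The step I expect to be the main obstacle is the twisting--modulus inequality itself: making precise, with a constant uniform over the infinitely many curves $\{[\gamma_{n+1}^j]\}_j$ at each level, that a single $K$-quasiconformal self-map performs only $O_K(\mathrm{mod}(A_{n+1}^j))$ twists about each of them. One must (i) quote or prove the extremal-length estimate in a form valid for one fixed $g$ acting simultaneously on all the collars, and (ii) secure the uniform modulus decay in $j$, where the Remark after Lemma \ref{lemma1} warns that the lengths $\ell_{X_{E(\omega)}}([\gamma_{n+1}^j])$ are genuinely non-uniform. The point to exploit is that the sibling gap, common to all $j$ at level $n+1$, already pinches every $[\gamma_{n+1}^j]$, so the moduli decay uniformly even though the exact lengths do not.
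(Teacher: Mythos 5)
Your skeleton is the right one --- show that the lengths $\ell_{X_{E(\omega)}}([\gamma_n^i])$ tend to infinity uniformly in $i$, then argue that a $K$-quasiconformal map cannot perform even a half twist about a sufficiently long geodesic --- but both load-bearing steps are left unestablished, and one of them is quantitatively wrong. The ``twisting--modulus inequality'' that you yourself flag as the main obstacle is precisely what the paper imports as Theorem \ref{thm3.7} (Matsuzaki): the extremal dilatation of an $m$-fold Dehn twist about $\gamma$ is at least $\sqrt{\{(2|m|-1)\ell_X(\gamma)/\pi\}^2+1}$. With that in hand no per-collar modulus comparison, no function $\psi(K)$, and no uniformity discussion over the infinitely many curves at each level is needed: choose $n_2$ so that $\ell_{X_{E(\omega)}}([\gamma_n^i])>\pi K^2$ for all $n\ge n_2$ and all $i$; if $g$ effected a half twist $f_n$ about some component of $\partial X_n$, then $f_n^2$ is a full Dehn twist, so $K(f_n)^2\ge K(f_n^2)\ge\sqrt{K^4+1}>K^2$, a contradiction. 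As written, your proposal replaces this quotable inequality by an assertion you do not prove, so the argument is not complete.

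The second gap is your claimed lower bound $\ell_{X_{E(\omega)}}([\gamma_{n+1}^j])\gtrsim 1/q_{n+1}$. The geometry goes the other way: the sibling gap of relative size $q_{n+1}$ makes the geodesic separating $I_{n+1}^{2i-1}$ from $I_{n+1}^{2i}$ \emph{short} (of order $1/\log(1/q_{n+1})$), and $[\gamma_{n+1}^j]$ merely crosses that short curve, so the collar lemma yields only a doubly logarithmic lower bound $\asymp\log\log(1/q_{n+1})$. This is exactly the content of the already-quoted Lemma \ref{lemma2.2}, with $\eta$ the collar function, and it is uniform in $j$ --- which disposes of the non-uniformity worry you raise at the end. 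Since only $\ell\to\infty$ is needed, the wrong rate does not by itself sink the strategy, but you should cite Lemma \ref{lemma2.2} rather than rederive a bound whose stated form is false. (Minor point: $q_n\to 0$ follows directly from the hypothesis --- if $q_{n+1}\ge c$ along a subsequence then $q_n\log\log(1/q_{n+1})\le\log\log(1/c)$ there, contradicting divergence --- with no appeal to the (I)/(II) dichotomy of \cite{Kinjo4}.)
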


We use a lemma of our previous paper \cite{Kinjo4} and a theorem of Matsuzaki (\cite{Matsuzaki2003}). In the following, $\eta$ is the collar function: $\eta(x)=\sinh^{-1} (1/\sinh (x/2))$.

\begin{lemma}[Lemma 2.2 of \cite{Kinjo4}]\label{lemma2.2}
Let $\omega =\{ q_n \}_{n=1}^{\infty}$ be an arbitrary sequence of numbers in $(0,1)$. For any $n \in \mathbb{N}$ and $i \in \{1,...,2^n \}$,
\[ \ell_{X_{E(\omega)}} ([\gamma_n^i]) > 2\eta  \left(\frac{\pi^2}{ \log ( (1+q_n)/(2q_n))}\right)\]
holds.
\end{lemma}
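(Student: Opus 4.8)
The plan is to pass from the hyperbolic length of $[\gamma_n^i]$ to a conformal modulus and then to estimate that modulus from the elementary planar geometry of the Cantor construction, reversing the mechanism behind Lemma~\ref{lemma1}. All level-$n$ intervals share the length $|I_n^i|=\tfrac{1-q_n}{2}|I_{n-1}|$, and each $I_n^i$ is separated from its sibling inside the same parent interval by a gap of length $q_n|I_{n-1}|$; thus, relative to $I_n^i$, this gap has size $t:=2q_n/(1-q_n)$, and a one-line computation gives $1+1/t=(1+q_n)/(2q_n)$, which is the source of the quantity in the statement. Since $[\gamma_n^i]$ is freely homotopic to curves separating $E(\omega)\cap I_n^i$ from $E(\omega)\setminus I_n^i$, any embedded annulus homotopic to $[\gamma_n^i]$ separates the endpoint pair $\{0,|I_n^i|\}$ of $I_n^i$ from the pair $\{b,\infty\}$, where $b$ denotes the near endpoint of the sibling.

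First I would bound the modulus $M$ of the maximal annulus homotopic to $[\gamma_n^i]$ from above. Among all annuli separating the two pairs above, the modulus is maximised by the Teichm\"uller ring $\widehat{\mathbb C}\setminus([0,|I_n^i|]\cup[b,\infty))$ whose complementary slits join the pairs; the remaining clusters of $E(\omega)$ impose only further constraints, so the bound is uniform in $i$. A similarity normalises this to the standard ring $\widehat{\mathbb C}\setminus([-1,0]\cup[t,\infty))$, and the classical complete-elliptic-integral estimate for its modulus as $t\to0$ should give
\[ M \le \frac{\pi/2}{\log\frac{1+q_n}{2q_n}}; \]
equivalently, the family of curves homotopic to $[\gamma_n^i]$ has extremal length at least $\tfrac{2}{\pi}\log\frac{1+q_n}{2q_n}$.

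The second step converts this modulus bound into the asserted length bound via the Collar Lemma. Suppose, for contradiction, that $\ell:=\ell_{X_{E(\omega)}}([\gamma_n^i])\le 2\eta\!\left(\pi^2/\log\tfrac{1+q_n}{2q_n}\right)$. Then $[\gamma_n^i]$ carries an embedded collar of half-width $\eta(\ell)$, hence an embedded annulus homotopic to it of modulus $\tfrac{2}{\ell}\arctan\tfrac{1}{\sinh(\ell/2)}$; since this collar modulus is strictly decreasing in $\ell$, a small enough $\ell$ forces it to exceed the upper bound $M$ of the previous step, which is impossible. Reading the monotone inequalities backwards yields $\ell>2\eta\!\left(\pi^2/\log\tfrac{1+q_n}{2q_n}\right)$. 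I expect the main obstacle to be the quantitative matching of these two estimates: extracting the clean closed form $\pi/(2\log\frac{1+q_n}{2q_n})$ from the Teichm\"uller-ring modulus, whose exact value is an elliptic integral, and then threading this constant through the decreasing collar function $\eta$ so that the sharp expression $2\eta(\pi^2/\log((1+q_n)/(2q_n)))$, rather than a merely comparable bound, is what finally emerges.
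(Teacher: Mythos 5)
There is a genuine gap, and it sits exactly where you flagged it --- but it is not a bookkeeping issue; the two estimates you propose to match provably do not match. Write $L:=\log\frac{1+q_n}{2q_n}$ and grant for the moment your Step 1 bound $M\le\frac{\pi}{2L}$ for every annulus homotopic to $[\gamma_n^i]$. Using the identity $\eta(2\eta(x))=x/2$, at the borderline length $\ell_0:=2\eta(\pi^2/L)$ one has $1/\sinh(\ell_0/2)=\sinh(\pi^2/(2L))$, so the collar modulus there equals $\frac{2}{\ell_0}\arctan\bigl(\sinh(\pi^2/(2L))\bigr)$. In the regime relevant to this paper, $q_n\to 0$, i.e.\ $L\to\infty$, this is asymptotically $\frac{\pi^2}{2L\log(4L/\pi^2)}$, which is \emph{smaller} than the cap $\frac{\pi}{2L}$ by the factor $\pi/\log(4L/\pi^2)$. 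Hence assuming $\ell\le\ell_0$ produces no contradiction once $L$ is large: the collar can comfortably fit under your modulus bound. Reading the monotone inequalities backwards, your mechanism only yields $\ell\gtrsim 2\log L-2\log\log L$, strictly short of the stated $2\eta(\pi^2/L)=2\log L+O(1)$. Since Theorem \ref{thm1} concerns $\omega$ with $q_n$ decaying super-exponentially, the failure occurs precisely where the lemma is used. (Ironically, the weaker bound your method does give would still suffice for the only application in this paper, Lemma \ref{lemma3.5}, which needs only $\ell_{X_{E(\omega)}}([\gamma_n^i])\to\infty$; but it does not prove the stated inequality.)

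Step 1 also has an error that independently spoils the clean constant: $\infty$ is a point of $X_{E(\omega)}$, not of $E(\omega)$, so an annulus homotopic to $[\gamma_n^i]$ need not separate $\{0,|I_n^i|\}$ from $\{b,\infty\}$ --- the point $\infty$ may lie in the annulus itself. You must replace $\infty$ by a fourth point $c\in E(\omega)\setminus I_n^i$; Teichm\"uller's modulus theorem then applies after a M\"obius normalization, but with a worse cross-ratio. For instance, taking $c$ to be the far endpoint of the sibling interval gives Teichm\"uller parameter $t'$ with $1+1/t'=\bigl(\frac{1+q_n}{2\sqrt{q_n}}\bigr)^2$, hence a cap of roughly $\pi/\bigl(4\log\frac{1+q_n}{2\sqrt{q_n}}\bigr)>\frac{\pi}{2L}$; taking $c$ far away recovers $\frac{\pi}{2L}$ only up to a bounded correction depending on $|I_n^i|$, never exactly. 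So even with Step 2 repaired, the precise expression $2\eta(\pi^2/L)$ would not emerge from this pairing. Note that the present paper does not prove the lemma at all --- it is imported from Lemma 2.2 of \cite{Kinjo4} --- so the proof must in any case rest on a different mechanism than the collar-of-$[\gamma_n^i]$-versus-Teichm\"uller-ring comparison; a natural candidate is to pass to a four-punctured sphere $Y\supset X_{E(\omega)}$, observe that $[\gamma_n^i]$ crosses the short geodesic encircling the gap, and harvest the collar width of \emph{that} curve, which produces bounds of the shape $\ell\ge 2\eta(\cdot)$ with the correct order $2\log\log(1/q_n)$.
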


\begin{theorem}[Part of Theorem 1 of \cite{Matsuzaki2003}]\label{thm3.7}
Let $\gamma$ be a simple closed geodesic on a Riemann surface $X$ and $f$ be $n$-times Dehn twist about $\gamma$. Then the maximal dilatation of an extremal quasiconformal automorphism of $f$ satisfies 
\[K(f) \ge \sqrt{\{(2|n|-1)\ell_{X}(\gamma)/\pi\}^2 +1}.\]
\end{theorem}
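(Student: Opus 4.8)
The plan is to read the statement as a lower bound for the extremal maximal dilatation
\[
K(T^n) = \inf\{K(h) : h \text{ a quasiconformal automorphism of } X,\ h \simeq T^n\},
\]
where $T$ is the Dehn twist about $\gamma$ and $\ell=\ell_X(\gamma)$. Since $T^{-1}$ and $T$ have the same extremal dilatation, I may assume $n\ge 1$. The natural machinery is the conformal modulus (equivalently extremal length) together with its quasi-invariance: for any $K$-quasiconformal $h$ and any curve family $\Gamma$ one has $\tfrac1K\,\mathrm{EL}(\Gamma)\le \mathrm{EL}(h\Gamma)\le K\,\mathrm{EL}(\Gamma)$. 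The whole point will be to manufacture a family $\Gamma$ attached to $\gamma$ whose extremal length is governed by $\ell$ and $\pi$, and whose image under a competitor $h\simeq T^n$ is forced, by the $n$-fold twisting, to have extremal length larger by the factor $\big((2n-1)\ell/\pi\big)^2+1$ under the square root.

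Concretely, I would pass to the annular cover $A_\gamma$ of $X$ associated to $\langle\gamma\rangle$ (equivalently $\mathbb{H}/\langle z\mapsto e^{\ell}z\rangle$), which is conformally the flat cylinder of circumference $\ell$ and height $\pi$, so $\mathrm{Mod}(A_\gamma)=\pi/\ell$. A competitor $h$ lifts to a $K$-quasiconformal self-map $\hat h$ of $A_\gamma$ preserving its two ends, and the homotopy $h\simeq T^n$ downstairs records itself in the boundary correspondence of $\hat h$ on the two ideal boundary circles as a relative twist of $n$ full turns; this boundary data is exactly what keeps the twist from being homotopically invisible on the open cylinder. I would then take $\Gamma$ to be the family of arcs joining the two ideal boundary circles of $A_\gamma$, for which the flat model gives $\mathrm{EL}(\Gamma)=\pi/\ell$, and estimate from below the extremal length of the image family $\hat h(\Gamma)$, whose members must cross the cylinder while winding; measuring such a winding crossing arc in the flat (slanted) metric and tracking the end-effects that reduce the naive winding count $2n$ to $2n-1$ should produce $\mathrm{EL}(\hat h\Gamma)\ge (\pi/\ell)\sqrt{\big((2n-1)\ell/\pi\big)^2+1}$, whence $K\ge \sqrt{\big((2n-1)\ell/\pi\big)^2+1}$.

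The main obstacle is making the constant exact rather than merely of the right order. Two points are delicate. First, the anchoring: on the open annulus $A_\gamma$ the Dehn twist is homotopic to the identity, so the integer $n$ must be extracted from the quasisymmetric boundary values of $\hat h$ (which are determined by the homotopy on $X$), and one must verify that these boundary values genuinely carry a relative rotation of $n$ turns. Second, and this is the technical heart, obtaining the precise factor $\sqrt{\big((2n-1)\ell/\pi\big)^2+1}$ — in particular the odd integer $2n-1$ and the exact $\sqrt{(\cdot)^2+1}$ shape — requires identifying the extremal metric for the winding family, equivalently the extremal Teichm\"uller map of the twisted quadrilateral obtained by cutting $A_\gamma$ along a crossing arc. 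I would handle this by a length–area (Gr\"otzsch-type) computation in the flat cylinder after unfolding to a fundamental strip, where the $2n-1$ emerges from counting the copies of the strip that a once-winding crossing arc must traverse and the $+1$ from the transverse (height $\pi$) contribution; verifying that this metric is admissible and extremal for the winding family, and that the bound is not beaten by letting the competitor spread beyond the collar, is where the real work lies.
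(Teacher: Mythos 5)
The first thing to say is that the paper contains no proof of this statement to compare you against: as the bracketed attribution indicates, it is quoted verbatim from Theorem~1 of Matsuzaki's 2003 Kodai paper and used in this paper only as an ingredient in the proof of Lemma~3.5. So the assessment must rest on your proposal's own completeness. Your framework --- lift a competitor $h\simeq T^n$ to the annular cover, realized as the flat cylinder of circumference $\ell$ and height $\pi$ with modulus $\pi/\ell$, and estimate via extremal length and its quasi-invariance --- is the natural setting, and it is the right neighborhood for the cited argument.

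But what you have written is a program, not a proof, and the gap sits exactly at the quantitative center. Your pivotal display, $\mathrm{EL}(\hat h\Gamma)\ge(\pi/\ell)\sqrt{((2n-1)\ell/\pi)^2+1}$, is asserted rather than derived, and the mechanism you describe cannot produce it as stated. Without anchoring, the image of the crossing family carries no winding at all: the family of all crossing arcs of the cylinder has extremal length exactly $\pi/\ell$, winding of an arc in the open annulus is meaningless up to free homotopy, and the endpoints of each image arc are moved by the unknown boundary values of $\hat h$ --- so the phrase ``whose members must cross the cylinder while winding'' has no content until you prove that the boundary extension of $\hat h$ (determined, via the standard fact that lifts of homotopic maps share boundary values, by the class of $T^n$ on $X$) forces a definite relative displacement; this is precisely the step you flag as ``anchoring'' and then defer. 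Moreover, your sketch is internally inconsistent about what the computation would yield: if one \emph{did} know that every image arc has horizontal displacement $(2n-1)\ell$ across height $\pi$, the length--area estimate gives $\mathrm{EL}(\hat h\Gamma)\ge(((2n-1)\ell)^2+\pi^2)/(\ell\pi)$ and hence $K\ge((2n-1)\ell/\pi)^2+1$, the \emph{square} of the stated bound --- so the square-root shape you wrote is not the output of the method you describe, which suggests the inequality was fitted to the known answer rather than derived. The genuine proof must let endpoints slide, control the equivariant boundary functions $\phi_0,\phi_1$ of the strip lift (monotone and commuting with $x\mapsto x+\ell$, whence bounded oscillation of $\phi_1-\phi_0$ --- the kind of bookkeeping from which the odd integer $2|n|-1$ emerges), and balance winding against endpoint displacement to obtain the mixed $\sqrt{(\cdot)^2+1}$ form. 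Since both points you yourself identify as ``the real work'' constitute the entire content of the theorem, the proposal stands as a plausible strategy outline with a genuine gap, not a proof.
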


\begin{proof}[Proof of Lemma \ref{lemma3.5}]
Since $\omega$ satisfies the condition of Theorem \ref{thm1}, $q_n \to 0$ as $n \to \infty$, therefore $\ell_{X_{E(\omega)}} ([\gamma_n^i]) \to \infty$ for any $i$ by Lemma \ref{lemma2.2}. Hence, for $K \ge 1$, there exists $n_2 \in \mathbb{N}$ such that  $ \ell_{X_{E(\omega)}} ([\gamma_n^i]) > \pi K^2$ if $n \ge n_2$. Assume that $g$ cause a half Dehn twist $f_n$ on some component $\gamma_n$ of $\partial X_n$ for some $n \ge n_2$. Then $f_n^2 := f_n \circ f_n$ is a Dehn twist about $\gamma_n$, so the maximal dilatation $K(f_n^2)$ of $f_n^2$ is larger than $\sqrt{K^4 +1} > K^2$ by Theorem \ref{thm3.7}. Since $K(f_n^2) \le K(f_n)^2$, we have $K <K(f_n)$, and this is a contradiction. From above, $g$ does not cause nor a Dehn twist nor multiple twists.
\end{proof}

Finally we prove the main theorem.

\begin{proof}[Proof of Theorem \ref{thm1}]
For $K$-quasiconformal automorphism $g: X_{E(\omega)} \to X_{E(\omega)}$, put $N:= \max\{n_1, n_2 \}$, where $n_1, n_2$ are numbers of Lemma \ref{lemma3.1} and Lemma \ref{lemma3.5}, respectively. Then, for any $n \ge N$, $g(X_{n} \setminus X_{n-1})$ is homotopic to $X_{n} \setminus X_{n-1} =\bigsqcup_{i=1}^{2^n} P_n^i$ in $X_{E(\omega)}$, and  on any component of $\partial X_n$, $g$ does not cause nor a half Dehn twist, a Dehn twist nor multiple twists.

Now, for an arbitrary $K \in \mathbb{N}$, let Mod$(X_{E(\omega)})_K$ be a subset of the Teichm\"uller modular group Mod$(X_{E(\omega)})$ such that each element has $K$-quasiconformal automorphism $g$ as a representative. From above, Mod$(X_{E(\omega)})_K$ is embedded in the reduced Teichm\"uller modular group $\text{Mod}^{\sharp}(X_N)$  for the bordered Riemann surface $X_N$. Indeed, $g$ is determined by $g|_{X_N}$, that is, for quasiconformal automorphisms $g_1, g_2$ of $X_{E(\omega)}$, if $g_1 |_{X_N} = g_2 |_{X_N}$, then $[g_1] =[g_2]$.

Since $X_N$ is topologically finite, $\text{Mod}^{\sharp} (X_N)$ is finitely generated, thus countable. Hence Mod$(X_{E(\omega)})_K$ is countable for any $K \in \mathbb{N}$, and Mod$(X_{E(\omega)})$ is countable, too.    
\end{proof}

\end{document}